\documentclass[a4paper,11pt]{amsart}

\usepackage{amsmath}
\usepackage{amssymb}
\usepackage{amsthm}
\usepackage[all,cmtip]{xy}
\usepackage{textcomp}
\usepackage{slashed}
\usepackage{enumerate}
\usepackage{lipsum}
\usepackage{caption}

\newtheorem{thm}{Theorem}[section]
\newtheorem{lemma}[thm]{Lemma}
\newtheorem{df}{Definition}[section]
\newtheorem{remark}{Remark}[section]
\newtheorem{ex}{Example}[section]

\numberwithin{equation}{section}
\newtheorem{cor}{Corollary}[section]

\newcommand{\Sp}{\mathrm{Sp}}
\newcommand{\GL}{\mathrm{GL}}

\newcommand{\Fl}{F_{1,2}}

\newcommand{\R}{\mathbb R}

\newcommand{\C}{\mathbb C}

\newcommand{\Q}{\mathbb H}
\newcommand{\Z}{\mathbb Z}

\newcommand{\QP}{\mathbb{HP}}
\newcommand{\CP}{\mathbb{CP}}

\newcommand{\cE}{\mathcal E}
\newcommand{\cO}{\mathcal O} 

\newcommand{\PT}{\mathcal P}
\newcommand{\cC}{\mathcal C}

\newcommand{\cgr}{V_2(\C^{2n+2})} 

\newcommand{\bpar}{\bar\partial} 
\newcommand{\ra}{\rightarrow}
\newcommand{\wt}{\widetilde}

\newcommand{\mh}{\mathcal{H}}

\newcommand{\coker}{\mathrm{coker}}

\newcommand{\im}{\mathrm{im}}

\title{Monogenic hull for the $n$-Cauchy-Fueter operator and twistor theory}
\author{Tom\'a\v s Sala\v c}
\thanks{
2010
Mathematics  Subject  Classification.
Primary: 35N05, 32L25.\\
Key words and phrases. Twistor theory, Penrose transform, $n$-Cauchy-Fueter operator, monogenic hull.\\
\\}
\begin{document}

\begin{abstract}
This is the first part in a series of three articles in which are studied the domains of monogenicity for the $n$-Cauchy-Fueter operator. Using the twistor theory, we will in this article show that for a given  open subset  $U$ of $\Q^n$, there is an open subset $\mh(U)$, called the monogenic hull of $U$, of $M_{2n\times2}^\C=\Q^n\otimes \C$ such that each monogenic function in $U$ extends to a unique pair of holomorphic functions on $\mh(U)$.

In the second part of the series we will exploit the twistor theory furthermore to prove that each pseudoconvex domain in $\Q^n$ is a domain of monogenicity. In the third part of the series, we show  the other implication and provide a geometric characterization of the domains of monogenicity.
\end{abstract}

\maketitle

\section{Introduction}\label{section introduction}

We will write  $\Q^n=\{(q_1,\dots,q_n):\ q_\ell=x_0^\ell+ix_1^\ell+jx_2^\ell+kx_3^\ell\in\Q,\ \ell=1,\dots,n\}$.
Let $U$ be an open subset of $\Q^n$  and $\psi:U\ra\Q$ be smooth. We put
\begin{equation}\label{dbar operator}
 \partial_{\bar q_\ell} \psi:=\frac{\partial \psi}{\partial x^\ell_0}+i\frac{\partial \psi}{\partial x_1^\ell}+j\frac{\partial \psi}{\partial x^\ell_2}+k\frac{\partial \psi}{\partial x^\ell_3}, \ \ell=1,\dots,n
\end{equation}
and call
\begin{equation}\label{n-CF op}
D\psi:=(\partial_{\bar q_1}\psi,\dots,\partial_{\bar q_n}\psi)
\end{equation}
the $n$-\textit{Cauchy-Fueter operator}. If $D\psi=0$, then $\psi$ is called \textit{monogenic} (or \textit{regular}) and  we denote by $\mathcal{R}(U)$ the space of monogenic functions  in $U$. See \cite{BAS}, \cite{BS}, \cite{CSSS} and \cite{CSS} for some background on this operator.

Fixing a linear isomorphism $\Q\ra\C^2$ as in Section \ref{section background on Q}, the function $\psi$ corresponds to a pair of functions $\psi_{A'}:U\ra\C,\ A=0,1$. If $\psi\in\mathcal{R}(U)$, then it is well known that $\psi_{0'},\psi_{1'}$ are analytic and since $\Q^n\otimes_\R\C$ is isomorphic to the space $M_{2n\times2}^\C$ of complex $2n\times2$ matrices, there is an open set $U_\C\subset M_{2n\times2}^\C$ and unique holomorphic functions $\psi^\C_{A'}:U_\C\ra\C$ such that $U_\C\cap\Q^n=U$  and $\psi^\C_{A'}|_{U}=\psi_{A'},\ A=0,1$. The main result of this paper is (see Theorem \ref{main thm}) that there is an open subset $\mh(U)$ of $M_{2n\times2}^\C$, called the \textit{monogenic hull} of $U$, with  $\mh(U)\cap\Q^n=U$ such that each monogenic function in $U$ extends to a  pair of holomorphic functions in $\mh(U)$. 

If $n=1$, then it is easy to see that $\mh(U)$ is maximal among all open subsets of $M_{2\times2}^\C$ which satisfy the extension requirement (see Example \ref{example fundamental solution in mh Qast}). If $n>1$, then the situation is more subtle. This is related to the fact that $D$ is an overdetermined operator and  Hartog's phenomenon holds for monogenic functions (this is originally due to \cite{Pe}, see also \cite[Theorem 3.3.5]{CSSS}, \cite{Wa} and \cite{WaI}). Hence, the theory of monogenic functions of several quaternionic variables is parallel to the theory of holomorphic functions.
Actually, we will use in the third part of the series \cite{TSI} the main result of this article to show that any domain of monogenicity\footnote{Loosely speaking, we call the open set $U$ a domain of monogenicity if there is no open subset $V$ of $\Q^n$ with $U\subsetneq V$ such that the restriction map $\mathcal{R}(V)\ra\mathcal{R}(U)$ is surjective. See \cite{TS} or \cite{TSI} for a precise definition.} is pseudoconvex\footnote{We call  $U$ pseudoconvex if it admits a smooth exhausting $\Q$-plurisubharmonic function or equivalently, $\delta^{-2}$ is $\Q$-plurisubharmonic where $\delta$ is the usual distance function to the boundary of $U$. See \cite{TSI}}.

In order to prove the main result, we will use the twistor theory as in \cite{EX}, see also \cite{B}, \cite{BE}, \cite{E} \cite{WaI} and \cite{WW}. Recall \cite[Section 4.4.9]{CS} that there is a fiber bundle $S^2\ra\wt U\xrightarrow{\tau} U$, called the \textit{twistor space}, associated to the flat almost quaternionic structure over $U$. The total space $\wt U$ carries a tautological almost complex structure which is integrable and thus, $\wt U$ is a complex manifold. In this article we will view $\wt U$ as an open submanifold of $\CP^{2n+1}$. The hardest part of the proof of Theorem \ref{main thm} is (see Theorem \ref{thm isom cohomology and monogenic functions}) to show  that there is an isomorphism 
\begin{equation}\label{isomorphism of cohomology groups and monogenic kernel over U}
 H^1(\wt U,L)\ra\mathcal{R}(U)
\end{equation}
where $L$ is a certain holomorphic line bundle over $\wt U$. This extends results given in (\cite{KW}). The isomorphism (\ref{isomorphism of cohomology groups and monogenic kernel over U}) is given by some completely explicit integral formula and is coming from the Penrose transform. 

In the second part \cite{TS} of the series, we will exploit the twistor theory furthermore. Using $L^2$ estimates as in \cite{H}, we will show that $H^2(\wt U,L)=0$ when $U$ is pseudoconvex and from this information  we will conclude that $U$ is a domain of monogenicity. 

\section*{Notation}

\begin{itemize}
\item $M_{n\times k}^T$= matrices of size $n\times k$ with coefficients in a field $T$
\item $T^\ast=T\setminus\{0\}$
\item $\Sp(1)=$ the group of unit quaternions
\end{itemize}

\section*{Acknowledgment}
The author is grateful to Vladim\'ir Sou\v cek for  stimulating discussions and to Roman L\'avi\v cka for his help.
The author gratefully acknowledges the support of the grant 17-01171S of the Grant Agency of the Czech Republic.

\section{$n$-Cauchy-Fueter operator}

\subsection{Some background on quaternions}\label{section background on Q}

Let $\Re\Q$ and $\Im\Q$ be the subspace of real and imaginary quaternions, respectively. We denote by $(x,y):=\Re(\bar xy),\ x,y\in\Q^n$ the standard real inner product  and  by $\|x\|:=\sqrt{(x,x)}$ the associated norm. We will always view $\Q^n$ as a \textbf{right} $\Q$-vector space and  $\C$ as the subalgebra of $\Q$ generated by $i$. Then $\Q^n$ is  a complex vector space of dimension $2n$ and the map 
\begin{equation}\label{lin isom Qn and C2n}
\C^{2n}\ra\Q^n,\ \ \
(\alpha_1,
\beta_2,\dots,\alpha_n,\beta_n)^T
\mapsto (\alpha_1+k\beta_1,\dots,\alpha_n+k\beta_n)^T.
\end{equation}
is a $\C$-linear isomorphism. Using the notation set in Introduction,  we may write
\begin{equation}\label{complex coordinates on Q}
\alpha_\ell=x_0^\ell+ix_1^\ell, \ \beta_\ell=x_3^\ell+ix_2^\ell\ \ \mathrm{and}\ \   \partial_{\bar q_\ell}=2(\partial_{\bar\alpha_\ell}+k\partial_{\bar\beta_\ell}),\ \ell=1,\dots,n.
 \end{equation}
The map $\Q^n\ra\Q^n,\ w\mapsto wk$ corresponds to
\begin{equation}
\mathbb K:\C^{2n}\ra\C^{2n},\ \mathbb  K(\alpha_1,\beta_1,\dots,\alpha_n,\beta_n)^T=(-\bar\beta_1,\bar\alpha_1,\dots,-\bar\beta_n,\bar\alpha_n)^T.
\end{equation}

Let $T=\C$ or $\Q$. As any $\Q$-linear map $\Q^k\ra\Q^n$ is also complex linear, there is an injective homomorphism of algebras $M_{n\times k}^\Q\hookrightarrow M_{2n\times2k}^\C$. On the other hand, a complex linear map $A:\C^{2k}\ra\C^{2n}$ is $\Q$-linear if and only if $A\circ\mathbb  K=\mathbb K\circ A$.
If $n=k=1$, then the embedding is
\begin{equation}\label{embedding Q into M22}
\C^2=\Q= M_{1\times 1}^\Q\hookrightarrow M_{2\times 2}^\C,\
\left(
\begin{matrix}
\alpha\\
\beta\\ 
\end{matrix}
\right)
\mapsto\left(
\begin{matrix}
\alpha&-\bar\beta\\
\beta&\bar\alpha\\
\end{matrix}
\right)
\end{equation}
and more generally:
\begin{equation}\label{embedding Qn into M2n2}
\C^{2n}=\Q^n= M(n,1,\Q)\hookrightarrow M_{2n\times2}^\C,\
x\mapsto \mathrm{M}(x):=(x|\mathbb K(x)).
\end{equation}
We will use the isomorphisms in (\ref{embedding Qn into M2n2}) without further comment.
Given  $(z_{AA'})\in M_{2\times 2}^\C$, there are unique  $\alpha,\beta,\gamma,\delta\in\C$ such that 
\begin{equation}\label{decomposition of C matrices}
 \left(
\begin{matrix}
z_{00'}&z_{01'}\\
z_{10'}&z_{11'}\\
\end{matrix}
\right)=\left(
\begin{matrix}
\alpha&-\bar\beta\\
\beta&\bar\alpha\\
\end{matrix}
\right)+
i\left(
\begin{matrix}
\gamma&-\bar\delta\\
\delta&\bar\gamma\\
\end{matrix}
\right).
\end{equation}
This shows  that $M_{2n\times2k}^\C=M_{n\times k}^\Q\otimes_\R\C$ and thus, we can view a matrix $z\in M_{2n\times2k}^\C$ as a pair $(x,y)$ where $x,y\in\Q^n$. We will do that without further comment. We will work with the norm $\|(x,y)\|_\C:=\sqrt{\|x\|^2+\|y\|^2}$ on $M_{2n\times2}^\C$.

\subsection{$n$-Cauchy-Fueter operator and the monogenic hull}
Given $\psi:U\ra\Q$, there are unique functions $\psi_{A'}:U\ra\C,\ A=0,1$ so that $\psi=\psi_{0'}+k\psi_{1'}$. Using \ref{complex coordinates on Q}, we see that $D\psi=0$ if and only if   
\begin{align}\label{CF I}
&\partial_{\beta_\ell}\psi_{1'}-\partial_{\bar\alpha_\ell}\psi_{0'}=0,\\
&\partial_{\alpha_\ell}\psi_{1'}+\partial_{\bar\beta_\ell}\psi_{0'}=0\nonumber
\end{align}
for every  $\ell=1,\dots,n$.
As $M_{2n\times2}^\C$ is the complexification of the totally real submanifold $\Q^n$ and any monogenic function is real analytic, it follows that there is an open subset $U_\C$ of $M_{2n\times2}^\C$ with a pair of holomorphic functions $\psi^\C_i:U_\C\ra\C$ such that:
\begin{enumerate}[(i)]
 \item $U_\C\cap\Q^n=U$ and
 \item $\psi^\C_{A'}|_{U}=\psi_{A'},\ A=0,1$.
\end{enumerate}
Moreover, it is well known that $(\psi^\C_{A'})_{A=0,1}$ are null solutions of 
\begin{equation}\label{hol n-CF op}
D^\C(\psi_{0'}^\C,\psi_{1'}^\C)= (\partial_{z_{A0'}}\psi^\C_{1'}-\partial_{z_{A1'}}\psi^\C_{0'})_{A=0,1,\dots,2n-1}.
\end{equation}
Conversely, if $(\psi_{A'}^\C)_{A=0,1}$ are holomorphic in $U_\C$ and are null solutions of (\ref{hol n-CF op}), then the restriction of these functions to $U$ is monogenic.
If $V$ is an open subset of $M_{2n\times2}^\C$, then we put $$\mathcal{R}^\C(V):=\{(\psi_{A'}^\C)_{A=0,1}:\ D^\C(\psi_{0'}^\C,\psi_{1'}^\C)=0\}.$$

\begin{df}\label{df mh}
Let $U\subset\Q^n$ be open. We call the set 
\begin{equation}\label{condition on monogenic hull}
\{(x,y)\in M_{2n\times2}^\C|\ \forall  q\in \Sp(1)\cap\Im\Q:\ x+yq\in U\}
\end{equation}
the monogenic hull  $\mh(U)$ of $U$.
\end{df}

It follows from the definition  that $\mh(U)$ is  open,  $U=\mh(U)\cap \Q^n$ and that $\mh(U)=\bigcap_{x\in U^\complement}\mh(\Q^n\setminus\{x\})$. Consider also the following example.

\begin{ex}\label{example mh}
Let $U$ be an open subset of $\Q$. Since $\det(x,y):=\|x\|-\|y\|+2i(x,y)$, it  is  clear  that $\mh(\Q^\ast)=\GL(2,\C)$. As $\GL(2,\C)$ is the complement of the analytic variety $\{z_{00'}z_{11'}-z_{01'}z_{10'}=0\}$ in $M_{2\times2}^\C$, it is a domain of holomorphy. By \cite[Theorem 2.6.9]{H}, also $\mh(U)$ is a domain of holomorphy.
\end{ex}

\begin{remark}
As we have seen in Example \ref{example mh}, $\mh(U)$ is a domain of holomorphy for any open subset $U$ of $\Q$. We will show in the third part of the series that $\mh(U)$ is a domain of holomorphy if and only if $U$ is a domain of monogenicity in $\Q^n$.
\end{remark}

The main result of this article is the following Theorem.

\begin{thm}\label{main thm}
Let $U$ be an open subset of $\Q^n$. Then the restriction map 
\begin{equation}\label{restriction map}
\mathcal{R}^\C(\mh(U))\ra\mathcal{R}(U),\ (\psi_{A'}^\C)_{A=0,1}\mapsto(\psi_{A'}^\C|_U)_{A=0,1}
\end{equation}
is an isomorphism. 
\end{thm}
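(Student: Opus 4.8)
The plan is to prove Theorem \ref{main thm} by establishing injectivity and surjectivity of the restriction map separately, with both directions resting on the real-analyticity of monogenic functions together with the explicit averaging built into Definition \ref{df mh}. For injectivity, suppose $(\psi_{A'}^\C)_{A=0,1}\in\mathcal{R}^\C(\mh(U))$ restricts to $0$ on $U$. Since $\mh(U)$ is open and connected components meeting $\Q^n$ along $U$, and since $\Q^n$ is a maximally totally real submanifold of $M_{2n\times2}^\C$ (this is exactly the content of the decomposition \eqref{decomposition of C matrices}), the identity theorem for holomorphic functions forces $\psi_{A'}^\C\equiv 0$ on the whole of $\mh(U)$ — here I would first check that every connected component of $\mh(U)$ actually meets $\Q^n$, which follows because $(x,0)\in\mh(U)$ for every $x\in U$ and $\mh(U)$ is defined fibrewise over the parameter $q\in\Sp(1)\cap\Im\Q$.

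For surjectivity, let $\psi\in\mathcal{R}(U)$ with components $\psi_{A'}$. By the discussion following \eqref{hol n-CF op}, there is \emph{some} open $U_\C\supset U$ in $M_{2n\times2}^\C$ and a holomorphic extension $(\psi_{A'}^\C)\in\mathcal{R}^\C(U_\C)$; the task is to show this extension propagates to all of $\mh(U)$. The key idea is that monogenicity is an $\Sp(1)$-equivariant condition: for a fixed imaginary unit quaternion $q$, the operator $\partial_{\bar q_\ell}$ restricted to the complex line $t\mapsto x+tyq$ (a complex line inside $M_{2n\times2}^\C$ once we complexify $t$) behaves like a one-variable Cauchy-Riemann-type operator, so the extension along each such slice is governed by classical one-variable holomorphic continuation. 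Concretely, I would fix $(x,y)\in\mh(U)$, consider the map $q\mapsto x+yq$ for $q$ ranging over $\Sp(1)\cap\Im\Q\cong S^2$, note that by definition of $\mh(U)$ this entire $S^2$-family lies in $U$, and then argue that the holomorphic extension of $\psi_{A'}$ is already determined on a neighbourhood of this compact family by $\psi$ itself, hence extends to $(x,y)$. The rigorous vehicle for this is the twistor correspondence: one uses the isomorphism \eqref{isomorphism of cohomology groups and monogenic kernel over U}, $H^1(\wt U, L)\xrightarrow{\sim}\mathcal{R}(U)$, together with the fact that $\wt U\subset\CP^{2n+1}$ is the total space of the $S^2$-bundle $\tau\colon\wt U\to U$, and the base change / naturality of the Penrose transform: the twistor space $\wt{\mh(U)}$ (suitably interpreted) and $\wt U$ carry the same relevant cohomology because $\mh(U)$ and $U$ have, by the very Definition \ref{df mh}, the same fibrewise twistor geometry. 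The explicit integral formula realizing \eqref{isomorphism of cohomology groups and monogenic kernel over U} then produces the holomorphic extension on $\mh(U)$ directly, and uniqueness of this extension follows from the injectivity already proved.

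I expect the main obstacle to be the surjectivity direction, and specifically the precise identification of how the cohomology group $H^1(\wt U, L)$ ``sees'' the larger set $\mh(U)$: one needs to show that the integral formula coming from the Penrose transform, which a priori reconstructs a monogenic function on $U$, in fact defines a holomorphic function on all of $\mh(U)$ satisfying $D^\C=0$, and that this is the unique such extension. This requires carefully tracking the geometry of the point $(x,y)\in\mh(U)$ relative to the twistor lines: the fibre $\tau^{-1}(x)$ over a point $x\in U$ is a projective line $\CP^1\subset\CP^{2n+1}$, and the condition $x+yq\in U$ for all $q\in\Sp(1)\cap\Im\Q$ should translate into the statement that a certain perturbed projective line (or a neighbourhood of the $S^2$-family of lines $\{\tau^{-1}(x+yq)\}$) still lies in $\wt U$, which is what lets the Penrose integral be evaluated at $(x,y)$. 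Handling this cleanly — rather than by ad hoc one-variable continuation slice by slice — is where the real work lies, and it is presumably why the paper routes the argument through Theorem \ref{thm isom cohomology and monogenic functions} rather than attempting a direct extension argument.
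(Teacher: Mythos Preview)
Your proposal is correct and follows essentially the same route as the paper: injectivity via the identity theorem on the totally real submanifold $\Q^n\subset M_{2n\times2}^\C$, and surjectivity by observing that the Penrose integral formula defining $\PT:H^1(\wt U,L)\to\mathcal R(U)$ can be evaluated at any $\Sigma\in\mh(U)$ precisely because $\wt\Sigma\subset\wt U$ (this is Theorem~\ref{thm characterization of monogenic hull}), yielding a map $\PT^\C:H^1(\wt U,L)\to\mathcal R^\C(\mh(U))$ that factors the isomorphism $\PT$ through the restriction map. One small imprecision: your phrase about $\wt{\mh(U)}$ and $\wt U$ ``carrying the same relevant cohomology'' is not what is used---the paper never compares these two cohomology groups, and indeed $\wt{\mh(U)}\supsetneq\wt U$ in general; the point (which your final paragraph states correctly) is that the \emph{same} cohomology class on $\wt U$ produces, via the same integral, a function on the larger base $\mh(U)$.
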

The proof of Theorem \ref{main thm} will occupy the rest of this article. Let us consider the following example.

\begin{ex}\label{example fundamental solution in mh Qast}
The fundamental solution of the Cauchy-Fueter operator is (up to constant)
\begin{equation*}
 E(q)=\frac{\bar q}{|q|^4}=\frac{\bar\alpha-k\beta}{(\alpha\bar\alpha+\beta\bar\beta)^2}
\end{equation*}
which is  monogenic in $\Q^\ast$. Hence, the corresponding complex functions are
\begin{equation*}
 \psi_{0'}=\frac{\bar\alpha}{(\alpha\bar\alpha+\beta\bar\beta)^2}\ \mathrm{and}\ \psi_{1'}=\frac{-\beta}{(\alpha\bar\alpha+\beta\bar\beta)^2}.
\end{equation*}
The holomorphic extension of $E$ to $\GL(2,\C)$ is 
\begin{equation*}
 \psi_{0'}^\C=\frac{z_{11'}}{(z_{00'}z_{11'}-z_{01'}z_{10'})^2}\ \mathrm{and}\  \psi_{1'}^\C=\frac{-z_{10'}}{(z_{00'}z_{11'}-z_{01'}z_{10'})^2}.
\end{equation*}
\end{ex}

If $U\subset\Q^n$ is open and $x\in U$, we put $\delta(x,U^\complement):=\inf_{y\in U^\complement}\|x-y\|$ so that  $\delta(-,U^\complement)$ is continuous in $U$. If $U_\C\subset\C^{4n}$ is open, we similarly define $\delta^\C(z,U_\C^\complement):=\inf_{w\in U_\C^\complement}\|z-w\|, \ z\in U_\C$.

\begin{lemma}\label{lemma distance functions}
Let $U$ be an open subset of $\Q^n$. Then, with the notation set above, we have 
\begin{equation*}
 \delta^\C((x,y),\mh(U)^\complement)=\frac{1}{\sqrt2}\inf\limits_{q\in\Im\Q\cap\Sp(1)}\delta(x+yq,U^\complement),\ (x,y)\in\mh(U).
\end{equation*}
\end{lemma}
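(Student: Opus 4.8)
The plan is to reduce the computation of $\delta^\C$ to a family of elementary distance computations, one for each ``twistor direction'' $q$. For $q\in\Sp(1)\cap\Im\Q$ set $A_q:=\{(x',y')\in M_{2n\times2}^\C\mid x'+y'q\notin U\}$. By Definition~\ref{df mh}, $(x',y')\notin\mh(U)$ precisely when $x'+y'q\notin U$ for some such $q$, i.e. $\mh(U)^\complement=\bigcup_{q\in\Sp(1)\cap\Im\Q}A_q$; since the distance from a point to a union of sets is the infimum of the distances to the members,
\[\delta^\C\big((x,y),\mh(U)^\complement\big)=\inf_{q\in\Sp(1)\cap\Im\Q}\delta^\C\big((x,y),A_q\big).\]
(If $U=\Q^n$, then $\mh(U)^\complement=\emptyset$ and both sides of the asserted identity are $+\infty$, so we may assume $U\neq\Q^n$.) It therefore suffices to prove, for each fixed $q$, the ``single slice'' identity $\delta^\C\big((x,y),A_q\big)=\tfrac1{\sqrt2}\,\delta(x+yq,U^\complement)$.

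To prove the slice identity I would study the $\R$-linear surjection $L_q\colon M_{2n\times2}^\C=\Q^n\oplus\Q^n\to\Q^n$, $L_q(x',y')=x'+y'q$, where the source carries the Euclidean inner product underlying $\|\cdot\|_\C$ (so $\langle(x',y'),(x'',y'')\rangle=(x',x'')+(y',y'')$) and the target the inner product $(\ ,\ )$. Using the cyclic invariance $\Re(abc)=\Re(bca)$ of the real part on $\Q$, one finds $(y'q,\eta)=\Re(\bar q\,\overline{y'}\,\eta)=\Re(\overline{y'}\,\eta\bar q)=(y',\eta\bar q)$, so the adjoint of $L_q$ is $L_q^\ast\eta=(\eta,\eta\bar q)$ and hence
\[L_qL_q^\ast\eta=\eta+(\eta\bar q)q=\eta+\eta\,|q|^2=2\eta,\qquad\text{i.e.}\qquad L_qL_q^\ast=2\,\mathrm{Id}_{\Q^n}.\]
Equivalently, $L_q$ vanishes on its kernel and acts on $(\ker L_q)^\perp$ as $\sqrt2$ times a linear isometry onto $\Q^n$. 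I expect this identity to be the only point of substance in the whole proof; it is where the constant $\tfrac1{\sqrt2}$ comes from, and everything else is formal.

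Granting $L_qL_q^\ast=2\,\mathrm{Id}$, the slice identity follows. For $\xi\in M_{2n\times2}^\C$ and $\eta\in\Q^n$ the fibre $L_q^{-1}(\eta)$ is the affine subspace $\xi_0+\ker L_q$ for any $\xi_0$ with $L_q\xi_0=\eta$; the distance from $\xi$ to it is the norm of the component of $\xi-\xi_0$ in $(\ker L_q)^\perp$, which by the previous paragraph equals $\tfrac1{\sqrt2}\|L_q(\xi-\xi_0)\|=\tfrac1{\sqrt2}\|L_q\xi-\eta\|$. Since $A_q=L_q^{-1}(U^\complement)=\bigcup_{\eta\in U^\complement}L_q^{-1}(\eta)$, taking the infimum over $\eta\in U^\complement$ gives $\delta^\C(\xi,A_q)=\tfrac1{\sqrt2}\,\delta(L_q\xi,U^\complement)$; putting $\xi=(x,y)$, so that $L_q\xi=x+yq$, yields the slice identity, and the first paragraph then gives the lemma.

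If one prefers to avoid the adjoint, the two inequalities of the lemma can be written out by hand, the content being exactly the ``even splitting'' that underlies $L_qL_q^\ast=2\,\mathrm{Id}$. For ``$\le$'': given $q$ and $p\in U^\complement$, put $w=(x+yq)-p$ and $(x',y')=\big(x-\tfrac12 w,\ y+\tfrac12 wq\big)$; then $x'+y'q=p\notin U$ (using $q^2=-1$), so $(x',y')\in\mh(U)^\complement$, while $\|(x,y)-(x',y')\|_\C=\tfrac1{\sqrt2}\|w\|$ since right multiplication by a unit quaternion is an isometry of $\Q^n$; taking infima over $p$ and $q$ gives $\delta^\C((x,y),\mh(U)^\complement)\le\tfrac1{\sqrt2}\inf_q\delta(x+yq,U^\complement)$. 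For ``$\ge$'': any $(x',y')\in\mh(U)^\complement$ admits a $q$ with $x'+y'q\notin U$, and since $x+yq\in U$ we get $\delta(x+yq,U^\complement)\le\|(x-x')+(y-y')q\|$, while $\|(x-x')+(y-y')q\|^2=\|x-x'\|^2+\|y-y'\|^2+2(x-x',(y-y')q)\le2\|(x,y)-(x',y')\|_\C^2$ by Cauchy--Schwarz; taking the infimum over $(x',y')\in\mh(U)^\complement$ gives $\inf_q\delta(x+yq,U^\complement)\le\sqrt2\,\delta^\C((x,y),\mh(U)^\complement)$.
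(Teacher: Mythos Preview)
Your proof is correct. In fact your ``by hand'' paragraph at the end is essentially identical to the paper's own argument: the paper chooses the optimal $q$ and a boundary point $x_\circ$, sets $w=x_\circ-x-yq$, and then uses the explicit midpoint $(x',y')=(x+\tfrac12 w,\,y-\tfrac12 wq)$ to get the $\le$ inequality, while the $\ge$ inequality is the same Cauchy--Schwarz estimate $\|w'+w''q'\|\le\sqrt{2(\|w'\|^2+\|w''\|^2)}$ you wrote.

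Your first, more conceptual, argument is a genuine improvement in presentation over the paper. Rewriting $\mh(U)^\complement=\bigcup_q L_q^{-1}(U^\complement)$ and then observing that $L_qL_q^\ast=2\,\mathrm{Id}$ isolates the entire content of the lemma in one algebraic identity: it explains at once why the constant is $1/\sqrt2$ (the map $L_q$ restricted to $(\ker L_q)^\perp$ is $\sqrt2$ times an isometry) and makes the two inequalities automatic rather than separate computations. The paper's version has the advantage of being completely self-contained and elementary, but it hides this structural reason behind the ad hoc choice of the point $(x',y')$; your linear-algebra packaging would also generalise more readily if one replaced $\Sp(1)\cap\Im\Q$ by another family of isometries.
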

\begin{proof}
Fix $(x,y)\in\mh(U)$ and put $c:=\inf_{q\in\Im\Q\cap\Sp(1)}\delta(x+yq,U^\complement)$.
Then there are $x_\circ\in\partial U$ and $q\in\Im\Q\cap\Sp(1)$ such that $\|x+yq-x_\circ\|=c$. Put $w:=x_\circ-x-yq$ and consider $x'=x+\frac{1}{2}w$ and $y'=y-\frac{1}{2}wq$. Then
$$\|(x',y')-(x,y)\|_\C=\|\frac{1}{2}(w,-wq)\|_\C=\frac{\sqrt2}{2}\|w\|=\frac{c}{\sqrt2}$$
and 
$$x'+y'q=x+yq+w=x_\circ.$$
It follows that $(x',y')\not\in\mh(U)$ and thus $\delta^\C((x,y),\mh(U)^\complement)\le\frac{c}{\sqrt2}$.

On the other hand, choose $(x'',y'')\in M_{2n\times2}^\C$  with 
$$\|(x'',y'')-(x,y)\|_\C<\frac{c}{\sqrt2}.$$
We put $w':=x''-x$ and $w'':=y''-y$. If $q'\in\Im\Q\cap\Sp(1)$, then 
\begin{align*}
\|(x''+y''q')-(x+yq')\|&=\|w'+w''q'\|\\
&\le\sqrt{\|w'\|^2+\|w''\|^2+2(w',w'')}\\
&\le\sqrt{\|w'\|^2+\|w''\|^2+2\|w'\|\|w''\|}\\
&\le\sqrt{2(\|w'\|^2+\|w''\|^2)}<c.
\end{align*}
It follows that $(x'',y'')\in\mh(U)$ and thus $\delta^\C((x,y),\mh(U)^\complement)\ge\frac{c}{\sqrt2}$.
\end{proof}

Hence, we have the following

\begin{cor}\label{corollary domain of convergence}
If $\psi$ is a monogenic function in the ball $B_r:=\{x\in\Q^n:\ \|x\|<r\}$, then the Taylor series\footnote{The Taylor series of $\psi$ is in the variables $\alpha_\ell,\bar\alpha_\ell,\beta_\ell,\bar\beta_\ell,\ \ell=1,\dots,n$.} of $\psi$ centered at $0$  converges in $B_{\frac{r}{\sqrt2}}$ to $\psi$.
\end{cor}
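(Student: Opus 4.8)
The plan is to combine the explicit description of the monogenic hull from Definition \ref{df mh} with Lemma \ref{lemma distance functions}, and then invoke the standard fact that a convergent power series (here the complexified Taylor series) converges on the largest polydisc, equivalently ball, on which it is holomorphic.

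First I would observe that since $\psi$ is monogenic in $B_r$, it is real analytic there, so its Taylor series in the variables $\alpha_\ell,\bar\alpha_\ell,\beta_\ell,\bar\beta_\ell$ converges near $0$; replacing $\bar\alpha_\ell,\bar\beta_\ell$ by independent complex variables turns this into an honest holomorphic power series on a neighborhood of $0$ in $M_{2n\times2}^\C=\C^{4n}$, whose sum is exactly the holomorphic extension $(\psi_{A'}^\C)_{A=0,1}$ of Theorem \ref{main thm}. By Theorem \ref{main thm} this extension is defined on all of $\mh(B_r)$, and since $\mh(B_r)$ is connected and contains $0$, the power series of $(\psi_{A'}^\C)$ centered at $0$ converges on every ball $\{\,\|z\|_\C<\rho\,\}$ contained in $\mh(B_r)$ (a holomorphic function on $\C^N$ equals its Taylor series on any ball around the center that lies in the domain — this is just term-by-term reasoning with Cauchy estimates). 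Pulling this back to $\Q^n$ via the totally real embedding, the Taylor series of $\psi$ converges to $\psi$ on the intersection of that ball with $\Q^n$, which is $B_\rho$.

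It then remains to identify the largest such $\rho$, namely $\delta^\C(0,\mh(B_r)^\complement)$. Here I would apply Lemma \ref{lemma distance functions} with $U=B_r$ and $(x,y)=(0,0)$: since $0+0\cdot q=0$ for every $q\in\Im\Q\cap\Sp(1)$, the infimum on the right-hand side is simply $\delta(0,B_r^\complement)=r$, giving $\delta^\C(0,\mh(B_r)^\complement)=r/\sqrt2$. Hence the ball $\{\|z\|_\C<r/\sqrt2\}$ is contained in $\mh(B_r)$, and the previous paragraph yields convergence on $B_{r/\sqrt2}$.

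There is essentially no hard step: the only point requiring mild care is the passage from "holomorphic on a ball in $\C^{4n}$" to "Taylor series converges on that ball," which is classical but I would state it cleanly (or cite \cite[Theorem 2.4.6]{H} or a similar reference) since the naive "polydisc of convergence" picture must be replaced by the correct statement that the Taylor series of a function holomorphic on a ball converges on that ball. Everything else is a direct substitution into Lemma \ref{lemma distance functions} and Theorem \ref{main thm}.
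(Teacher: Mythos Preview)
Your proposal is correct and is exactly the argument the paper has in mind: the corollary is stated immediately after Lemma~\ref{lemma distance functions} with only the word ``Hence'', so the intended proof is precisely the combination of Theorem~\ref{main thm} (holomorphic extension to $\mh(B_r)$), Lemma~\ref{lemma distance functions} applied at $(x,y)=(0,0)$ to get $\delta^\C(0,\mh(B_r)^\complement)=r/\sqrt2$, and the classical fact about Taylor series of holomorphic functions on balls that you spell out. There is nothing to add.
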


Notice that Corollary \ref{corollary domain of convergence} is in accordance with \cite{Ha}.

\section{The Penrose transform for the $n$-Cauchy-Fueter operator}
In Section \ref{section cp1} we will review some well known material on sheaf cohomology groups of holomorphic line bundles over the Riemann sphere and provide some elementary proofs which will be used afterwards.

\subsection{Complex projective line}
\label{section sheaf coh groups over complex projective line}\label{section cp1}

We will use the standard homogeneous coordinates on $\CP^1$ and put $\mathfrak{X}_i:=\{[\pi_0:\pi_1]:\ \pi_i\ne0\},\ i=0,1$. Then there are biholomorphisms $\mathfrak{X}_0\ra\C,\ [\pi_0:\pi_1]\mapsto z:=\pi_1/\pi_0$ and $\mathfrak{X}_1\ra\C,\ [\pi_0:\pi_1]\mapsto w:=\pi_0/\pi_1$ with inverses $\C\ni z\mapsto[1:z]\in \mathfrak{X}_0$ and $\C\ni w\mapsto[w:1]\in \mathfrak{X}_1$, respectively.  We have that $\mathfrak{X}_0\cap \mathfrak{X}_1=\{[\pi_0:\pi_1]:\ \pi_0\pi_1\ne0\}=\{[1:z]: \ z\ne0\}=\{[w:1]:\ w\ne0\}\cong\C^\ast$ and that  $[1:z]=[w:1]$ if and only if $z=w^{-1}$. Hence, we can view $\mathfrak{X}_0$ and $\mathfrak{X}_1$ as $\C$ and we will do that without further comment.

We will denote by $Q_k,\ k\in\Z$ the holomorphic line bundle over $\CP^1$ with the transition function $z^{-k}$ in $\mathfrak{X}_0\cap \mathfrak{X}_1$. This means that  smooth functions $f_i:\mathfrak{X}_i\ra\C,\ i=0,1$  define a smooth section of $Q_k$ if 
\begin{equation}\label{clutching function over complex projective line}
 f_1(z^{-1})=z^{-k}f_0(z),\ \forall z\in\C^\ast.
\end{equation}
The section  is holomorphic if both functions are holomorphic. 

We will denote by $\Lambda^{(0,1)}\CP^1$ the vector bundle of $(0,1)$-forms over $\CP^1$, i.e. the fiber of this bundle over $x\in\CP^1$ is the vector space of all complex anti-linear maps $T_x\CP^1\ra\C$. The bundle $\Lambda^{(0,1)}\CP^1$ is trivialized by $d\bar z$ over $\mathfrak{X}_0$ and by  $d\bar w$ over $\mathfrak{X}_1$ with $d\bar w=-\bar z^{-2}d\bar z$ in $\mathfrak{X}_0\cap\mathfrak{X}_1$. It follows that a global smooth section of $\Lambda^{(0,1)}(Q_k):=\Lambda^{(0,1)}\CP^1\otimes\ Q^{k}$ is then given by a pair $(f_0\ d\bar z,f_1\ d\bar w)$ where $f_i:\mathfrak{X}_i\ra\C,\ i=0,1$ are smooth and
\begin{equation}\label{clutching function over complex projective line I}
f_1(z^{-1})=-z^{-k}\bar z^{2}f_0(z),\ \forall z\in\C^\ast.
\end{equation}
We denote by
\begin{equation*}
\cE(\CP^1,Q_k):=\Gamma(\CP^1,Q_k)\ \ \mathrm{and} \ \ \cE^{(0,1)}(\CP^1,Q_k):=\Gamma(\CP^1,\Lambda^{(0,1)}(Q_k)) 
\end{equation*}
the corresponding spaces of global sections. The  Dolbeault complex is 
\begin{displaymath}
\bpar:\cE(\CP^1,Q_k)\ra\cE^{(0,1)}(\CP^1,Q_k),\
\bpar(f_0,f_1)=\big(\partial_{\bar z}f_0\ d\bar z,\partial_{\bar w}f_1\ d\bar w\big).
 \end{displaymath}
We put  $H^0(\CP^1,Q_k):=\ker(\bpar)$ and $H^1(\CP^1,Q_k):=\coker(\bpar)$. By definition,  $H^0(\CP^1,Q_k)$ is  the space  of global holomorphic sections of $Q_k$.

$\CP^1$ can be also viewed as a 1-point compactification of $\C=\mathfrak{X}_0$ with the point $\infty=[0:1]$ at infinity, i.e.  $\mathfrak{X}_0$ is an open and dense subset of $\CP^1$ and thus each smooth section of a vector bundle over $\CP^1$ is uniquely determined by its restriction to $\mathfrak{X}_0$.

\begin{lemma}\label{lemma estimates}
 Let $k\in\Z$ and $f_0:U_0\ra\C$ be smooth. 
 
 (a) If $f_0$ extends to a global smooth section of $Q_k$, then
\begin{equation}\label{necessary con I}
\lim_{z\ra\infty} z^\ell f_0(z)\ \ \Big\{\begin{array}{cl}
                                         =0,&\ell< -k\\
                                         \in\C,&\ell=-k\\
                                        \end{array}
\end{equation}
(b) If $f_0\ d\bar z$ extends to a global smooth section of  $\Lambda^{(0,1)}(Q_k)$, 
then
\begin{equation}\label{necessary con II}
\lim_{z\ra\infty} z^\ell\bar z^n f_0(z)=0
\end{equation}
provided that $\ell+n< -k+2$
\end{lemma}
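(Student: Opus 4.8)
The plan is to use the explicit clutching relations \eqref{clutching function over complex projective line} and \eqref{clutching function over complex projective line I} to transport the question about the behaviour of $f_0$ as $z\to\infty$ in $\mathfrak{X}_0$ into a question about the companion component of the section near the origin of $\mathfrak{X}_1$, where global smoothness provides a finite limit.

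For (a), write $f_1$ for the $\mathfrak{X}_1$-component of the global smooth section of $Q_k$ extending $f_0$; it is smooth on $\mathfrak{X}_1\cong\C$, and \eqref{clutching function over complex projective line} reads $f_1(z^{-1})=z^{-k}f_0(z)$ on $\C^\ast$. Solving for $f_0$ gives $f_0(z)=z^kf_1(z^{-1})$, so $z^\ell f_0(z)=z^{\ell+k}f_1(z^{-1})$. Substituting $w=z^{-1}$, this equals $w^{-(\ell+k)}f_1(w)$, and as $z\to\infty$ we have $w\to 0$ with $f_1(w)\to f_1(0)\in\C$ by continuity of $f_1$ at the origin. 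When $\ell<-k$ the factor $w^{-(\ell+k)}$ tends to $0$, giving $\lim_{z\to\infty} z^\ell f_0(z)=0$; when $\ell=-k$ the factor is identically $1$ and the limit is $f_1(0)\in\C$. This is \eqref{necessary con I}.

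For (b) the same device applies, with the only extra ingredient being the antiholomorphic factor $d\bar w=-\bar z^{-2}d\bar z$, which is already incorporated in \eqref{clutching function over complex projective line I}: if $f_1\,d\bar w$ is the $\mathfrak{X}_1$-component of the extending section, then $f_1(z^{-1})=-z^{-k}\bar z^2 f_0(z)$, hence $f_0(z)=-z^k\bar z^{-2}f_1(z^{-1})$ on $\C^\ast$. Therefore $z^\ell\bar z^n f_0(z)=-z^{\ell+k}\bar z^{\,n-2}f_1(z^{-1})$, and taking absolute values, $|z^\ell\bar z^n f_0(z)|=|z|^{\ell+n+k-2}\,|f_1(z^{-1})|$. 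Since $f_1(z^{-1})\to f_1(0)$ as $z\to\infty$ while $|z|^{\ell+n+k-2}\to 0$ precisely when $\ell+n+k-2<0$, i.e. when $\ell+n<-k+2$, we obtain \eqref{necessary con II}.

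The argument is elementary and I expect no genuine obstacle; the only points requiring care are the bookkeeping of the transition functions --- in particular not dropping the factor $\bar z^{\pm2}$ coming from the $(0,1)$-form bundle in part (b) --- and the translation of ``behaviour at $\infty$ in $\mathfrak{X}_0$'' into ``behaviour at $0$ in $\mathfrak{X}_1$'', which is exactly the step where global smoothness (equivalently, smoothness across the point at infinity) is invoked.
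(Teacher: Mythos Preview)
Your proof is correct and follows essentially the same route as the paper: rewrite $z^\ell f_0(z)$ (resp.\ $z^\ell\bar z^n f_0(z)$) in terms of $f_1$ via the clutching relation, substitute $w=z^{-1}$, and use smoothness of $f_1$ at $w=0$. Your passage to absolute values in part~(b) is in fact slightly cleaner than the paper's factoring of $f_1(0)$ out of the limit, since it sidesteps any concern about the individual exponents $-k-\ell$ and $2-n$ having opposite signs.
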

\begin{proof}
(a) If $(f_0,f_1)\in\cE(\CP^1,Q_k)$, it follows that  $z^\ell f_0(z)=z^{k+\ell} f_1(z^{-1})= w^{-k-\ell}f_1(w)$ where $w=z^{-1}\ne0$ and $\ell\in\Z$. Thus, (\ref{necessary con I}) is equal to $\lim_{ w\ra0} w^{-k-\ell}f_1( w)=f_1(0)\lim_{ w\ra0} w^{-k-\ell}$ and  the first claim follows. 

(b) If $(f_0\ d\bar z, f_1 d\bar w)\in\cE^{(0,1)}(\CP^1,Q_k)$, then we find that  $z^\ell \bar z^nf_0(z)=-w^{-k-\ell}\bar w^{-n+2}f_1( w)$ where $w=z^{-1}\ne0$ and $\ell\in\Z$. It follows that the limit in (\ref{necessary con II}) is equal to $f_1(0)\lim_{w\ra0}-w^{-k-\ell}\bar w^{-n+2}$. If $-k-\ell-n+2>0$, then it is zero.
\end{proof}

Assume that  $\omega:=(h_0(z)\ d\bar z,h_1(w)\ d\bar w)\in\cE^{(0,1)}(\CP^1,Q_k)$. By Lemma \ref{lemma estimates}, it follows that the integral
\begin{equation}\label{coefficients}
 a_{\ell}:=\frac{1}{2\pi i}\int_\C z^\ell h_0(z)\ d\bar z\wedge dz,\ 0\le\ell\le-k-2
\end{equation}
converges.

\begin{lemma}\label{lemma vanishing of cohomology groups}
If $k\le-2$, then the map
\begin{equation}\label{cech cohomology representative}
\omega\mapsto (a_{0},\dots,a_{-k-2})
\end{equation}
defined above descends to linear isomorphism $H^1(\CP^1,Q_k)\ra\C^{-k-1}$ while $H^1(\CP^1,Q_k)=\{0\}$ otherwise. 
\end{lemma}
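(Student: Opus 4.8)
The plan is to view the assignment $\omega\mapsto(a_0,\dots,a_{-k-2})$ as a linear map on the whole cocycle space $\cE^{(0,1)}(\CP^1,Q_k)$ and to check three things in turn: that it annihilates $\im\bpar$ (so that it descends to $H^1$), that it is onto $\C^{-k-1}$ when $k\le-2$, and that its kernel is exactly $\im\bpar$ — which when $k\ge-1$ says directly that $H^1(\CP^1,Q_k)=\{0\}$, since then the range of indices $0\le\ell\le-k-2$ is empty. For well-definedness, given $(f_0,f_1)\in\cE(\CP^1,Q_k)$ and $0\le\ell\le-k-2$ I would write $z^\ell\partial_{\bar z}f_0=\partial_{\bar z}(z^\ell f_0)$ and apply Stokes on the disc $\{|z|\le R\}$, obtaining $\frac{1}{2\pi i}\int_{|z|\le R}z^\ell\partial_{\bar z}f_0\,d\bar z\wedge dz=\frac{1}{2\pi i}\oint_{|z|=R}z^\ell f_0\,dz$; since $\ell+1\le-k-1<-k$, Lemma \ref{lemma estimates}(a) forces $z^{\ell+1}f_0\to0$, so the boundary term is $o(1)$ and $a_\ell(\bpar f)=0$.

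For surjectivity when $k\le-2$, I would fix a smooth partition of unity $\rho_0+\rho_1\equiv1$ on $\CP^1$ subordinate to $\{\mathfrak{X}_0,\mathfrak{X}_1\}$ with $\rho_1\equiv1$ near $\infty$. For each integer $j$ with $k+1\le j\le-1$, $\rho_1z^j$ is a smooth section of $Q_k$ over $\mathfrak{X}_0$ (it extends by $0$ across $z=0$) and $-\rho_0z^j$ is a smooth section over $\mathfrak{X}_1$ (it extends by $0$ across $z=\infty$), and on the overlap their difference is the holomorphic cocycle $z^j$; hence their $\bpar$'s agree there and glue to some $\omega^{(j)}\in\cE^{(0,1)}(\CP^1,Q_k)$ with $\omega^{(j)}|_{\mathfrak{X}_0}=\partial_{\bar z}(\rho_1z^j)\,d\bar z$, whose $h_0$-part is compactly supported in $\C^\ast$. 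The same Stokes computation then gives $a_\ell(\omega^{(j)})=\frac{1}{2\pi i}\lim_{R\to\infty}\oint_{|z|=R}\rho_1z^{j+\ell}\,dz=\delta_{\ell,-1-j}$, because $\rho_1\equiv1$ on large circles. As $j$ runs through $k+1,\dots,-1$, the exponents $-1-j$ run through $0,\dots,-k-2$, so the classes $[\omega^{(j)}]$ hit the standard basis of $\C^{-k-1}$.

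It remains to show $[\omega]=0$ whenever all $a_\ell(\omega)=0$. Given $\omega=(h_0\,d\bar z,h_1\,d\bar w)$, solve $\partial_{\bar z}u_0=h_0$ on $\mathfrak{X}_0=\C$ and $\partial_{\bar w}u_1=h_1$ on $\mathfrak{X}_1=\C$ (Dolbeault--Grothendieck on the plane, e.g. via Cauchy transforms of $h_i$ truncated to exhausting discs). In the frame of $Q_k$ the two primitives differ on $\mathfrak{X}_0\cap\mathfrak{X}_1$ by a holomorphic section of $Q_k$ over $\C^\ast$, i.e. by a Laurent series $g(z)=\sum_nc_nz^n$; and $\omega$ is globally $\bpar$-exact exactly when $g$ splits as a holomorphic section over $\mathfrak{X}_0$ (powers $n\ge0$) minus one over $\mathfrak{X}_1$ (powers $n\le k$), i.e. exactly when $c_n=0$ for every $n$ in the gap $k+1\le n\le-1$. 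When $k\ge-1$ the gap is empty, so every $\omega$ is exact and $H^1(\CP^1,Q_k)=\{0\}$. When $k\le-2$, one identifies $c_{-1-\ell}$ with $a_\ell(\omega)$ for $0\le\ell\le-k-2$ by running the Stokes argument on $u_0$: $\oint_{|z|=R}z^\ell u_0\,dz$ extracts the residue of $z^\ell g$, the contribution of $u_1$ being negligible because, read in the $\mathfrak{X}_0$-frame, $u_1$ only carries powers $\le k<-\ell-1$ plus a remainder controlled by Lemma \ref{lemma estimates}(b). Hence the kernel of $\omega\mapsto(a_\ell)$ is precisely $\im\bpar$, and together with surjectivity this makes the map descend to the claimed isomorphism $H^1(\CP^1,Q_k)\xrightarrow{\ \sim\ }\C^{-k-1}$.

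The main obstacle is this last identification: one must control the behaviour at $\infty$ of the locally constructed $\bpar$-primitives well enough to know that the holomorphic defect $g$ is a genuine Laurent series and to read off its relevant coefficients from the integrals $a_\ell$ by letting $R\to\infty$. This is exactly where Lemma \ref{lemma estimates}(b) enters: it supplies the decay of $h_0$ forced by $\omega$ extending over $\infty$, which tames the otherwise borderline tails; with that in hand, everything else is Stokes' theorem together with the finite-dimensional linear algebra of Laurent polynomials.
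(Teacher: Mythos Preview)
Your argument is correct and follows essentially the same route as the paper: Stokes' theorem plus Lemma~\ref{lemma estimates}(a) to show the map descends, local $\bpar$-primitives $u_0,u_1$ and the Laurent expansion of their holomorphic defect on $\C^\ast$ for injectivity, with the identification $a_\ell=c_{-1-\ell}$ obtained by the same boundary-integral computation. Your partition-of-unity/\v Cech construction for surjectivity makes explicit what the paper leaves as ``easy to see''; one small imprecision is that the vanishing of the $u_1$-contribution in the final step follows simply from $u_1$ being smooth at $w=0$ (so $|z^{\ell+1}t(z)|=|z|^{\ell+k+1}|u_1(z^{-1})|\to0$ since $\ell+k+1\le-1$), not from Lemma~\ref{lemma estimates}(b).
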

\begin{proof}
 If $\omega$ is exact, say $\partial_{\bar z}f_0=h_0$  where $f_0: \mathfrak{X}_i\ra\C$ is smooth, then by  Stokes' theorem (see \cite[Section 1.3]{H}): 
$$\int_\C z^\ell h_0(z)\ d\bar z\wedge dz=\lim_{R\ra+\infty}\int_{S^1_R}z^\ell f_0dz=\lim_{R\ra+\infty}\int_{0}^{2\pi}f_0(Re^{it})iR^{\ell+1}e^{it(\ell+1)}dt$$
where $S^1_R=\{z\in\C:\ |z|=R\}$. Since $\ell+1<-k$, it follows by Lemma \ref{lemma estimates}(a)  that $\lim_{R\ra+\infty}R^{\ell+1}|f_0(Re^{it})|=0$ and thus the map (\ref{cech cohomology representative}) descends to cohomology.

It is easy to see that the map (\ref{cech cohomology representative}) is onto and thus, it remains to show that it is injective. Let us  assume that $a_0=\dots=a_{-k-2}=0$. By \cite[Theorem 1.4.4]{H}, there are functions $g_i:\mathfrak{X}_i\ra\C,\ i=0,1$ such that $\partial_{\bar z}g_0=h_0$ and $\partial_{\bar w}g_1=h_1$. Put $t:\mathfrak{X}_0\cap \mathfrak{X}_1\ra\C,\ t(z):=z^k g_1(z^{-1})$. Then $\partial_{\bar z}t=h_0$ in $\mathfrak{X}_0\cap \mathfrak{X}_1$, and thus $g_0-t$ is analytic in $\mathfrak{X}_0\cap \mathfrak{X}_1$, say $g_0-t=\sum_{i\in\Z}b_iz^i,\ b_i\in\C,\ z\ne0$. By the  definition of $t$ and the first part of the proof, it follows that $a_\ell=b_{-1-\ell},\ \ell=0,\dots,-k-2$.  Put $f_0:=g_0-\sum_{i\ge0}b_iz^i$ and $f_1:=g_1-\sum_{i\le k}b_{i}w^{k-i}$. Then $f_i:\mathfrak{X}_i\ra\C,\ i=0,1$ are smooth  with $\partial_{\bar z}f_0=h_0,\ \partial_{\bar w}f_1=h_1$ and $f_1(z^{-1})=z^{-k}f_0(z)$. We have proved that $f=(f_0,f_1)\in\cE(\CP^1,Q_k)$ and  $\omega=\bpar f$.
\end{proof}

\begin{ex}\label{example distinct cohomology class}
Notice that the map
\begin{align}\label{harmonic cohomology}
\left(
\begin{array}{c}
 a_0\\
 a_1\\
\end{array}
\right)
\mapsto2\bigg[\bigg(\frac{a_0 d\bar z+a_1\bar z d\bar z}{(1+z\bar z)^3},\frac{ -a_0\bar wd\bar w-a_1d\bar w}{(1+w\bar w)^3}\bigg)\bigg],
\end{align}
where $[\ \ ]$ denotes the corresponding cohomology class, is inverse  to the isomorphism $H^1(\CP^1,Q^{-3})\ra\C^{2}$ from Lemma \ref{lemma vanishing of cohomology groups}.
\end{ex}

\subsection{Double fibration diagram and correspondence}\label{section dfd}

Using  (\ref{lin isom Qn and C2n}), there is a well defined embedding $\iota:\QP^n\hookrightarrow V_{2}(\C^{2n+2})$ where $\QP^n$ is the quaternionic projective space in dimension $n$ and $V_{2}(\C^{2n+2})$ is the Grassmannian of complex 2-dimensional subspaces in $\C^{2n+2}$.  We will view  $\Q^n$ as the standard affine subset $\{[1:q_1:\dots:q_n]:\ q_\ell\in\Q,\ \ell=1,\dots,n\}$ of $\QP^n$. Now consider the map 
\begin{equation}\label{coordinates on affine of V2}
 (z_{AB'})\in M_{2n\times2}^\C\mapsto
 \left[
 \begin{matrix}
  1&0\\
  0&1\\
  z_{00'}&z_{01'}\\
  \vdots&\vdots\\
  z_{2n-1,0'}&z_{2n-1,1'}\\
 \end{matrix}
 \right]\in V_2(\C^{2n+2})
\end{equation}
where we denote by  square brackets the complex linear subspace spanned by the columns of the given $(2n+2)\times2$-matrix. The map identifies $M_{2n\times2}^\C$ with an open, dense and affine subset  of $V_2(\C^{2n+2})$ which we for brevity also denote by $M_{2n\times2}^\C$ and we will  view a $2n\times2$-matrix as the corresponding 2-plane in $\C^{2n+2}$  without further comment.
Altogether, there are inclusions
\begin{equation}\label{inclusions}
\begin{matrix}
\Q^n&\subset&\QP^n\\
\cap&&\cap\\
M_{2n\times2}^\C&\subset&V_2(\C^{2n+2}).\\
\end{matrix}
\end{equation}
where the embedding  $\Q^n\hookrightarrow M_{2n\times2}^\C$ is given in (\ref{embedding Qn into M2n2})

Consider the double fibration diagram 
\begin{equation}\label{double fibration diagram I}
\xymatrix{
&\Fl\ar[dl]^\eta\ar[dr]^\tau&\\
\CP^{2n+1}&&V_2(\C^{2n+2})}
\end{equation}
where $\Fl$ is the flag manifold of nested subspaces $(\ell,\Sigma)$  where $\ell\in\CP^n,\ \Sigma\in\cgr$ and $\ell\subset\Sigma$. The maps $\eta$ and $\tau$ are the obvious projections. The space on the left hand side is  called the \textit{twistor space} and the space in the middle upstairs is  called the \textit{correspondence space}. 

Let $U_c\subset\cgr$. We put $\hat U_c:=\tau^{-1}(U_c)$ and $\wt U_c:=\eta(\hat U_c)$ so  there is  another diagram
\begin{equation}\label{double fibration diagram for U}
\xymatrix{&\ar[dl]\hat U_c\ar[dr]&\\
\widetilde U_c&&U_c.}
\end{equation}
We for clarity put $\wt \Q^n:=\wt{\Q^n},\ \wt M_{2n\times2}^\C:=\wt{M_{2n\times2}^\C}$ and $\wt \Sigma:=\wt{\{\Sigma\}}$ where $\Sigma\in \cgr$. By definition, $\wt \Sigma$ is the set of all complex projective lines which are contained in $\Sigma$ and thus, $\wt\Sigma$ is biholomorphic to $\CP^1$. If $\Sigma$ is the 2-plane  on the right hand side of (\ref{coordinates on affine of V2}), then  $\wt\Sigma$ is the image of the embedding 
\begin{align}\label{embedding of the fiber of the correspondence}
\iota_\Sigma:\CP^1&\hookrightarrow\CP^{2n+1},\\
 [\pi_0:\pi_1]&\mapsto[\pi_0:\pi_1:\pi_0z_{00'}+\pi_1z_{01'}:\dots:\pi_0z_{2n-1,0'}+\pi_1z_{2n-1,1'}].\nonumber
\end{align}
It is easy to see that there is a biholomorphism
\begin{equation}\label{trivialization of tau}
\CP^1\times U_c\ra\hat U_c,\ ([\pi_0:\pi_1],\Sigma)\mapsto(\iota_\Sigma([\pi_0:\pi_1]),\Sigma)
\end{equation}
so that $\tau|_{\hat U_c}:\hat U_c\ra U_c$ corresponds to the projection onto the first factor.

It is clear that $\wt{M}_{2n\times2}^\C=W_0\cup W_1$ where
\begin{align}\label{coordinates on W0}
W_0&=\{[1:z_0:\dots:z_{2n}]:\ z_i\in\C,\ i=0,1,\dots,2n\}
\end{align}
and
\begin{align}\label{coordinates on W1}
W_1&=\{[w_0:1:w_1:\dots:w_{2n}]:\ w_i\in\C,\ i=0,1,\dots,2n\}
\end{align}
and that $W_0\cap W_1$ is the subset $\{z_0\ne0\}$ of $W_0$ and $\{w_0\ne0\}$ of $W_1$. The change of coordinates  is 
$$w_0=z_0^{-1},\ w_i=z_iz_0^{-1},\ i=1,\dots,2n.$$

\subsection{Correspondence over $\Q^n$}\label{section correspondence over Qn}
Let  $\mathfrak{X}_0,\mathfrak{X}_1$ be the open affine subsets of $\CP^1$ from Section \ref{section sheaf coh groups over complex projective line}
and $U$ be an open subset of $\Q^n$. By (\ref{inclusions}), we shall view  $U$  as an open subset of $V_2(\C^{n+2})$.  By (\ref{trivialization of tau}), there is a diffeomorphism $\CP^1\times U\ra\hat U$ such that $\CP^1\times U\cong\hat U\xrightarrow{\tau|_{\hat U}} U$ is  the canonical projection onto the second factor. 
We will view $\hat U_i:= \mathfrak{X}_i\times U=\C\times U,\ i=0,1$  as  open  subsets of $\hat U$.  It is easy to see that 
 $\widetilde U_i:=\eta(\hat U_i)=\widetilde U\cap W_i,\ i=0,1$ and that $\eta$ restricts to maps $\hat U_0\ra\widetilde U_0$ and $\hat U_1\ra\widetilde U_1$ which are given by
\begin{equation}\label{projection eta a}
(z,
\left(
\begin{matrix}
\alpha_1\\
\beta_1\\
\vdots\\
\alpha_n\\
\beta_n\\
\end{matrix}\right)
)\mapsto
\left[
\begin{matrix}
1\\
z\\
\alpha_1-z\bar\beta_1\\
\beta_1+z\bar\alpha_1\\
\vdots\\
\alpha_n-z\bar\beta_n\\
\beta_n+z\bar\alpha_n]
\end{matrix}
\right]
\ \ \mathrm{and} \ \
(w,\left(
\begin{matrix}
\alpha_1\\
\beta_1\\
\vdots\\
\alpha_n\\
\beta_n\\
\end{matrix}
\right))\mapsto
\left[
\begin{matrix}
w\\
1\\
w\alpha_1-\bar\beta_1\\
w\beta_1+\bar\alpha_1\\
\vdots\\
w\alpha_n-\bar\beta_n\\
w\beta_n+\bar\alpha_n\\
\end{matrix}
\right],
\end{equation}
respectively.
Using the notation from (\ref{coordinates on W0}) and (\ref{coordinates on W1}), we have
\begin{align*}
 z_0=z,\ z_{2i-1}=\alpha_i-z\bar\beta_i,\ z_{2i}=\beta_i+z\bar\alpha_i
\end{align*}
and
\begin{align*}
 w_0=w,\ w_{2i-1}=w\alpha_i-\bar\beta_i,\ w_{2i}=w\beta_i+\bar\alpha_i,
\end{align*}
 and conversely
\begin{align}\label{change of coordinates on complex projective space 0}
& z=z_0,\ \alpha_i=\frac{z_{2i-1}+z_0\bar z_{2i}}{1+z_0\bar z_0},\ \beta_i=\frac{z_{2i}-z_0\bar z_{2i-1}}{1+z_0\bar z_0}
\end{align}
and
\begin{align}\label{change of coordinates on complex projective space 1}
& w=w_0,\ \alpha_i=\frac{\bar w_{2i}+\bar w_0 w_{2i-1}}{1+w_0\bar w_0},\ \beta_i=\frac{-\bar w_{2i-1}+\bar w_0w_{2i}}{1+w_0\bar w_0}
\end{align}
where $i=1,\dots,n$. It is now clear  that the maps $\hat U_i\ra\wt U_i,\ i=0,1$ are diffeomorphisms and hence, we get the following important Lemma.

\begin{lemma}\label{lemma correspondence}
Let $U$ be an open subset of $\Q^n$. Then $\wt U$ is an open subset of $\CP^{2n+1}$ and
 $\eta:\hat U\ra\widetilde U$ is a diffeomorphism.
\end{lemma}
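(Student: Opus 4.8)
The plan is to verify the two assertions of Lemma~\ref{lemma correspondence} directly from the coordinate descriptions established above, using the trivialization~(\ref{trivialization of tau}) together with the explicit formulas~(\ref{projection eta a})--(\ref{change of coordinates on complex projective space 1}). First I would recall that by~(\ref{inclusions}) and~(\ref{coordinates on affine of V2}) the set $U\subset\Q^n$ sits inside the affine chart $M_{2n\times2}^\C\subset V_2(\C^{2n+2})$, so that $\hat U=\tau^{-1}(U)$ and, by the biholomorphism~(\ref{trivialization of tau}), $\hat U\cong\CP^1\times U$ with $\tau|_{\hat U}$ the projection to the second factor. Writing $\CP^1=\mathfrak X_0\cup\mathfrak X_1$ gives the open cover $\hat U=\hat U_0\cup\hat U_1$ with $\hat U_i=\mathfrak X_i\times U\cong\C\times U$, and $\wt U=\eta(\hat U)=\eta(\hat U_0)\cup\eta(\hat U_1)=\wt U_0\cup\wt U_1$ with $\wt U_i=\wt U\cap W_i$.

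Next I would argue that $\wt U$ is open in $\CP^{2n+1}$. The point is that $\eta|_{\hat U_i}:\hat U_i\to W_i$ is, in the affine coordinates~(\ref{coordinates on W0}) and~(\ref{coordinates on W1}), given by the smooth map
\[
(z,(\alpha_\ell,\beta_\ell)_\ell)\mapsto(z_0,z_1,\dots,z_{2n}),\quad z_0=z,\ z_{2\ell-1}=\alpha_\ell-z\bar\beta_\ell,\ z_{2\ell}=\beta_\ell+z\bar\alpha_\ell,
\]
and similarly over $\hat U_1$. Formulas~(\ref{change of coordinates on complex projective space 0}) and~(\ref{change of coordinates on complex projective space 1}) exhibit an explicit two-sided smooth inverse on the image, so $\eta|_{\hat U_i}$ is a diffeomorphism of $\hat U_i$ onto its image, an open subset of $W_i\subset\CP^{2n+1}$. (One checks the denominators $1+z_0\bar z_0$, $1+w_0\bar w_0$ never vanish, and that the composite of the two formulas is the identity, which is a routine bookkeeping computation I would not write out.) Hence $\wt U_0$ and $\wt U_1$ are open in $\CP^{2n+1}$, and so is their union $\wt U$.

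Finally, to see that $\eta:\hat U\to\wt U$ is a (global) diffeomorphism, it remains to check injectivity, since we already have that it is a local diffeomorphism onto the open set $\wt U$. Injectivity of $\eta|_{\hat U_i}$ follows from the inverse formulas just mentioned. For a point of $\hat U_0\cap\hat U_1$ (i.e. with $z\in\C^\ast$, equivalently $z_0\ne0$) the two descriptions agree under the change of coordinates $w_0=z_0^{-1}$, $w_j=z_jz_0^{-1}$, so there is no ambiguity; and a point in $\hat U_0\setminus\hat U_1$ maps into $W_0\setminus W_1$ while a point in $\hat U_1\setminus\hat U_0$ maps into $W_1\setminus W_0$, so images from different charts cannot collide. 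Therefore $\eta:\hat U\to\wt U$ is a bijective local diffeomorphism, hence a diffeomorphism.

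The only mildly delicate point — and the one I would treat most carefully — is verifying that the pair of formulas~(\ref{change of coordinates on complex projective space 0})--(\ref{change of coordinates on complex projective space 1}) genuinely inverts~(\ref{projection eta a}) on the overlap and that the transition $w_j=z_jz_0^{-1}$ makes the two local inverses patch consistently; everything else is immediate from the product structure $\hat U\cong\CP^1\times U$ and the fact that an injective local diffeomorphism is a diffeomorphism onto its image.
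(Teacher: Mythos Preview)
Your proof is correct and follows essentially the same approach as the paper: the paper simply observes, immediately before stating the lemma, that the explicit inverse formulas (\ref{change of coordinates on complex projective space 0}) and (\ref{change of coordinates on complex projective space 1}) show that $\eta|_{\hat U_i}:\hat U_i\to\wt U_i$ are diffeomorphisms, and declares the lemma to follow. Your write-up supplies the additional bookkeeping (openness of $\wt U_i$ in $W_i$, global injectivity via the chart transitions) that the paper leaves implicit, but the underlying argument is the same.
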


We can now give an equivalent characterization of the monogenic hull associated to $U$.

\begin{thm}\label{thm characterization of monogenic hull}
Let $U$ be an open subset of $\Q^n$. Then 
$$\mh(U)=\{\Sigma\in M_{2n\times2}^\C:\ \wt\Sigma\subset\widetilde U\}.$$
\end{thm}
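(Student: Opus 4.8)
The plan is to unravel both sides of the claimed equality in terms of concrete coordinates and reduce everything to the defining condition \eqref{condition on monogenic hull} of $\mh(U)$. Fix $\Sigma=(x,y)\in M_{2n\times2}^\C$, thought of via \eqref{decomposition of C matrices} as a pair of quaternionic vectors $x,y\in\Q^n$, equivalently as the $2n\times2$ complex matrix $(z_{AB'})$. By \eqref{embedding of the fiber of the correspondence}, $\wt\Sigma$ is exactly the image $\iota_\Sigma(\CP^1)\subset\CP^{2n+1}$, so the inclusion $\wt\Sigma\subset\wt U$ says precisely that $\iota_\Sigma([\pi_0:\pi_1])\in\wt U$ for every $[\pi_0:\pi_1]\in\CP^1$. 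On the other hand $(x,y)\in\mh(U)$ says $x+yq\in U$ for every $q\in\Sp(1)\cap\Im\Q$. So the whole theorem amounts to a bijective correspondence between points of $\CP^1$ and imaginary unit quaternions, compatible with the two descriptions of membership.

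The key computation is therefore the following. Cover $\CP^1$ by $\mathfrak{X}_0,\mathfrak{X}_1$ and $\wt U$ by $\wt U_0,\wt U_1$ as in Section \ref{section correspondence over Qn}. Using \eqref{change of coordinates on complex projective space 0}, a point $\iota_\Sigma([1:z])\in W_0$ lies in $\wt U_0$ iff the quaternionic point with complex coordinates $\alpha_i=\frac{z_{2i-1}+z\bar z_{2i}}{1+z\bar z}$, $\beta_i=\frac{z_{2i}-z\bar z_{2i-1}}{1+z\bar z}$ lies in $U$; one checks (via the coordinate conventions of Section \ref{section background on Q}, in particular \eqref{embedding Qn into M2n2} and \eqref{complex coordinates on Q}) that this quaternionic point is exactly $x+yq(z)$ for a specific $q(z)\in\Im\Q\cap\Sp(1)$, and that $z\mapsto q(z)$ is a diffeomorphism $\C=\mathfrak{X}_0\xrightarrow{\sim}(\Sp(1)\cap\Im\Q)\setminus\{\text{north pole}\}$ — essentially the inverse stereographic projection $S^2\cong\CP^1$. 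The point $z=\infty$, handled in the chart $\wt U_1$ via \eqref{change of coordinates on complex projective space 1}, supplies the missing unit imaginary quaternion. Granting this dictionary, $\wt\Sigma\subset\wt U$ holds iff $x+yq\in U$ for all $q\in\Sp(1)\cap\Im\Q$, i.e. iff $(x,y)\in\mh(U)$, and we are done; the case $\Sigma\in V_2(\C^{2n+2})\setminus M_{2n\times2}^\C$ does not arise since both sides are subsets of the affine chart.

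The main obstacle is purely bookkeeping: pinning down the map $z\mapsto q(z)$ correctly and verifying that $x+yq(z)$ really equals the quaternion read off from \eqref{change of coordinates on complex projective space 0}. This requires carefully tracking the identification $\Q^n\hookrightarrow M_{2n\times2}^\C$, $x\mapsto(x\,|\,\mathbb K x)$, the action $q\in\Im\Q\cap\Sp(1)$ on the second slot, and the clutching conventions for $\CP^1$; a clean way to organize it is to first do the case $n=1$ by hand — where $x+yq$ with $q=\cos\theta\,i+\dots$ can be matched termwise against $\frac{z_{2i-1}+z\bar z_{2i}}{1+z\bar z}$ after substituting $z=\tan(\theta/2)e^{i\varphi}$ or a similar parametrization — and then observe the general $n$ follows slotwise, since both the embedding and the $\iota_\Sigma$ construction are "diagonal" in the index $i=1,\dots,n$. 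I do not expect any genuine difficulty beyond making these identifications precise, since Lemma \ref{lemma correspondence} already guarantees $\wt U$ is open and $\eta\colon\hat U\to\wt U$ is a diffeomorphism, so no convergence or topology issues intervene.
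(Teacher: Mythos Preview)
Your proposal is correct and follows essentially the same route as the paper's proof: both reduce the statement to the key identity $\{\Sigma_\ell:\ell\in\wt\Sigma\}=\{x+yq:q\in\Sp(1)\cap\Im\Q\}$, i.e.\ to exhibiting an explicit bijection $\CP^1\xrightarrow{\sim}\Sp(1)\cap\Im\Q$ under which the fiber-inclusion condition matches Definition~\ref{df mh}. The only cosmetic difference is that the paper parametrizes $\CP^1$ by homogeneous coordinates $[\alpha:\beta]$ with $|\alpha|^2+|\beta|^2=1$ (obtaining $q=(\alpha\bar\alpha-\beta\bar\beta)i+2j\bar\alpha\beta$ directly), whereas you propose working in the affine chart $z\in\mathfrak X_0$ via the inversion formulas \eqref{change of coordinates on complex projective space 0} and handling $z=\infty$ separately; the paper's choice avoids the chart-splitting but the computations are equivalent.
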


\begin{proof}
First of all, it is  clear that $\wt\Q^n=\wt M_{2n\times2}^\C=W_0\cup W_1$ and that $\wt\Q^n=\bigcup_{\Sigma\in\Q^n}\wt{\Sigma}$ where the sum is disjoint. Hence, if we denote by $\Sigma_\ell\in\mathbb{HP}^n$ the unique quaternionic line which contains $\ell\in\CP^{2n+1}$, then we have
\begin{align*}
 \wt\Sigma\subset\widetilde U&\Leftrightarrow 
  \wt\Sigma\cap\widetilde{U}^\complement=\emptyset\\
  &\Leftrightarrow\wt\Sigma\cap\widetilde{U^\complement}=\emptyset\\
  &\Leftrightarrow
 \{\Sigma_\ell:\ \ell\in\wt\Sigma\}\cap U^\complement=\emptyset\\
 &\Leftrightarrow 
 \{\Sigma_\ell:\ \ell\in\wt\Sigma\}\subset U
\end{align*}
where we put $\wt U^\complement:=\wt \Q^n\setminus \wt U$.  Thus if $\Sigma=(x,y)$ where $x,y\in\C^{2n}=\Q^n$, then it is enough to show that
\begin{equation}\label{help}
\{\Sigma_\ell:\ \ell\in\wt\Sigma\}=\{x+yq:\ q\in\Im\Q\cap\Sp(1)\}.
\end{equation}
Recall (\ref{embedding Qn into M2n2}) that $\Sigma=(x,y)$ by definition means that $\Sigma=\mathrm{M}(x)+i\mathrm{M}(y)$ where $\mathrm{M}(x)=(x|\mathbb K(x))$, i.e. the first column of $\mathrm{M}(x)\in M_{2n\times2}^\C$ is $x$ and the second column is $\mathbb K(x)$. Observe that $i\mathrm{M}(y)=(iy |-\mathbb K(iy))$. It is a straighforward computation to verify that
$$\{\Sigma_\ell:\ \ell\in\wt \Sigma\}=\big\{
 x+i(\alpha\bar\alpha-\beta\bar\beta)y-2\beta\bar\alpha \mathbb K(iy)
\big|\ \alpha,\beta\in\C,\ \alpha\bar\alpha+\beta\bar\beta=1
\big\}.$$
By (\ref{lin isom Qn and C2n}),  $\mathbb K(iy)\in\C^{2n}$ corresponds to $yik=-yj\in\Q^n$ and thus, we see that $i(\alpha\bar\alpha-\beta\bar\beta)y-2\beta\bar\alpha\mathbb K(iy)\in\C^{2n}$ corresponds to
\begin{equation*}
y(\alpha\bar\alpha-\beta\bar\beta)i+2yj\bar\alpha\beta\in\Q^n.
\end{equation*}
Now it is easy to see that (\ref{help}) holds.
\end{proof}

\section{Dolbeault complex  over the twistor space}

\subsection{Filtration of the vector bundle of $(0,q)$-forms}

By Lemma \ref{lemma correspondence}, there are diffeomorphisms $\widetilde U\cong\hat U\cong\CP^1\times U$ and we for brevity denote the composition $\widetilde U\cong\hat U\xrightarrow{\tau} U$ also by $\tau$ as there is no risk of confusion.  Also recall Section \ref{section correspondence over Qn} that  $\wt U=\wt U_0\cup\wt U_1$ and that $\wt U_i=\mathfrak X_i\times U\cong \C\times U,\ i=0,1$.

The composition $\zeta: \wt U\cong\CP^1\times U\ra\CP^1$, where the second map is the canonical projection, is the restriction of the canonical projection $\CP^{2n+1}\ni[\pi_0:\pi_1:\dots:\pi_{2n+1}]\mapsto[\pi_0:\pi_1]$. We see that $\zeta$ is  holomorphic and thus  $L_k:=\zeta^\ast Q_k,\ k\in\Z$ is a holomorphic vector bundle over $\wt U$. By (\ref{clutching function over complex projective line}), it follows  that  a pair of smooth functions  $f_i:\wt U_i=\mathfrak{X}_i\times U\ra\C,\ i=0,1$ defines a global smooth section of $L_k$ if and only if
 \begin{equation}\label{clutching function over twistor space}
 f_1(z^{-1},x)=z^{-k}f_0(z,x),\ z\in\C^\ast,\ x\in U
\end{equation}
holds on $\wt U_0\cap\wt U_1=(\mathfrak{X}_0\cap \mathfrak{X}_1)\times U=\C^\ast\times U$.

\medskip

As $\wt U$ is an open subset of the complex manifold $\CP^{2n+1}$, then $T\wt U_\C:=T\wt U\otimes\C=T^{(1,0)}\oplus T^{(0,1)}$ where $T^{(0,1)}$ and $T^{(1,0)}$ is the $(+i)$-eigenspace and $(-i)$-eigenspace with respect to the canonical almost complex  structure on $T\wt U_\C$, respectively. We denote by $\Lambda^{(0,q)}$ the vector bundle over $\wt U$ whose fiber over $\ell$ consists of all skew-symmetric complex anti-linear maps  $\otimes^q T_\ell\wt U\ra\C$. 

Even though the projection  $\tau:\widetilde U\ra U$ is not holomorphic, it is a submersive surjection with fibers diffeomorphic to $\CP^1$. This induces a surjective vector bundle map $T\tau_\C:T\widetilde U_\C\ra TU_\C$. It follows that  $\ker(T\tau_\C)$ is a  subbundle of $T\wt U_\C$ of rank 1 and thus,  $K:=\ker(T\tau_\C)^\bot\cap\Lambda^{(0,1)}$  is a  subbundle of $\Lambda^{(0,1)}$ of co-dimension 1 which induces a short exact sequence
\begin{equation}\label{filtration of 01 forms}
 0\ra K\ra\Lambda^{(0,1)}\ra\Lambda_\tau^{(0,1)}\ra0
\end{equation}
and more generally,
\begin{equation}\label{filtration of 0q forms}
 0\ra \Lambda^{q+1}K\ra\Lambda^{(0,q+1)}\ra\Lambda^{(0,1)}_\tau\wedge\Lambda^{q}K\ra0,\ q\ge1.
\end{equation}
Let us now consider the bundles $K$ and $\Lambda^{(0,1)}_\tau$ over  $\wt U_i,\ i=0,1$.

Using (\ref{change of coordinates on complex projective space 0}), it is clear that the vector bundle $T^{0,1}$ is over $\widetilde U_0$ spanned by the vector fields
\begin{align}\label{anti-holomorphic vector fields over twistor space}
 \partial_{\bar z},\ X^{2i-1}_0:=z\partial_{\beta_i}-\partial_{\bar\alpha_i},\ X^{2i}_0:=z\partial_{\alpha_i}+\partial_{\bar\beta_i},\ i=1,\dots,n.
\end{align}
We denote by
$d\bar z, dX^i_0,\ i=1,\dots,2n$
the dual co-framing by $(0,1)$-forms which trivialize $\Lambda^{(0,1)}$ over $\wt U_0$. From (\ref{change of coordinates on complex projective space 1}), it follows that the bundle $T^{0,1}$ is over $\wt U_1$ spanned by the anti-holomorphic vector fields
\begin{align*}
\partial_{\bar w},\ X_1^{2i-1}:=\partial_{\beta_i}-w\partial_{\bar\alpha_i},\ X_1^{2i}:=\partial_{\alpha_i}+w\partial_{\bar\beta_i},\ i=1,\dots,n.
\end{align*}
We denote by 
$d\bar w,dX^i_1,\ i=1,\dots,2n$
the dual co-framing over $\wt U_1$ by $(0,1)$-forms. 

We see that $\ker(T\tau_\C)$ is  over $\widetilde U_0$ spanned by $\partial_z,\partial_{\bar z}$  and by $\partial_w,\partial_{\bar w}$ over $\widetilde U_1$. 
Hence,  $K$ is  over $\widetilde U_0$  spanned by $dX_0^1,\dots,dX_0^{2n}$ and by 
$dX^1_1,\dots, dX_1^{2n}$  over $\widetilde U_1$.
Notice that over $\wt U_0\cap\wt U_1$:
\begin{equation}\label{transition function for anti-hol fields}
 X_1^i=z^{-1}X_0^i\ \mathrm{and}\ dX_1^i=z dX_0^i,\ i=1,\dots,2n
\end{equation}
which implies

\begin{lemma}
$K$ is a holomorphic vector bundle   isomorphic to $\bigoplus_{1}^{2n}L_{1}$.
\end{lemma}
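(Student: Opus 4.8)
The plan is to identify $K$ with $\bigoplus_1^{2n}L_1$ directly through the distinguished framings $dX_0^1,\dots,dX_0^{2n}$ over $\wt U_0$ and $dX_1^1,\dots,dX_1^{2n}$ over $\wt U_1$ constructed above, by checking that the associated transition functions are holomorphic and coincide with those of $\bigoplus_1^{2n}L_1$.

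First I would record the standard framing of $L_1=\zeta^\ast Q_1$ coming from (\ref{clutching function over twistor space}) with $k=1$: there is a holomorphic frame $s_0$ of $L_1$ over $\wt U_0$ and a holomorphic frame $s_1$ over $\wt U_1$ with $s_1=z\,s_0$ on $\wt U_0\cap\wt U_1=\C^\ast\times U$; equivalently, the component functions of a section transform by $f_1(z^{-1},x)=z^{-1}f_0(z,x)$. Hence $\bigoplus_1^{2n}L_1$ is trivialized over $\wt U_0$ and over $\wt U_1$, with transition matrix $z\,I_{2n}$ between the framings $\{s_0\ \text{in slot } j\}$ and $\{s_1\ \text{in slot }j\}$.

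Next I would define the candidate isomorphism $\Phi\colon\bigoplus_1^{2n}L_1\to K$ as the bundle map which over $\wt U_0$ sends the frame $s_0$ of the $i$-th summand to $dX_0^i$ and over $\wt U_1$ sends the frame $s_1$ of the $i$-th summand to $dX_1^i$, for $i=1,\dots,2n$. Since $dX_1^i=z\,dX_0^i$ on the overlap by (\ref{transition function for anti-hol fields}) — matching $s_1=z\,s_0$ — these two locally defined bundle maps agree on $\wt U_0\cap\wt U_1$ and glue to a global isomorphism of the underlying smooth complex vector bundles. The coordinate $z$ on $\mathfrak{X}_0\cap\mathfrak{X}_1$ pulls back along the holomorphic map $\zeta$ to a holomorphic function on $\wt U_0\cap\wt U_1$, so the transition matrix $z\,I_{2n}$ is holomorphic; the framings $\{dX_i^j\}_j$ therefore endow $K$ with a holomorphic vector bundle structure, with respect to which $\Phi$ carries holomorphic frames to holomorphic frames and is a biholomorphism. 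This yields $K\cong\bigoplus_1^{2n}L_1$ as holomorphic vector bundles.

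I do not expect a genuine obstacle here. The only point to watch is that $K$ is a priori merely a smooth subbundle of $\Lambda^{(0,1)}$, which is not a holomorphic bundle over $\wt U$, so the holomorphic structure on $K$ has to be \emph{supplied} by the framings $\{dX_i^j\}$ — the content of the lemma being exactly that their transition functions $z\,I_{2n}$ turn out to be holomorphic — and that one must match the direction and exponent of the clutching function, i.e. that $dX_1^i=z\,dX_0^i$ pairs with the transition $z$ of $L_1$ rather than $z^{-1}$, which is immediate on comparing (\ref{transition function for anti-hol fields}) with (\ref{clutching function over twistor space}).
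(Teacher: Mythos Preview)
Your proposal is correct and follows essentially the same approach as the paper: the paper's entire argument is the observation (\ref{transition function for anti-hol fields}) that $dX_1^i=z\,dX_0^i$ on the overlap, which matches the clutching relation of $L_1$, and you have simply spelled out this identification more carefully. Your added remark that the holomorphic structure on $K$ is being \emph{supplied} by these framings (since $K$ sits inside the non-holomorphic bundle $\Lambda^{(0,1)}$) is a worthwhile clarification that the paper leaves implicit.
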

It also follows that the complex line bundle $\Lambda_\tau^{0,1}$ is   over $\widetilde U_0$ spanned by  $d\bar z+K$  and  by $d\bar w+K$ over $\widetilde U_1$. As there is no risk of confusion, we will for brevity write $d\bar z$ and $d\bar w$ instead of $d\bar z+K$ and $d\bar w+K$, respectively.

\subsection{Dolbeault complex}

Let us for brevity put $L:=L_{-3}$.
We will use the following conventions:
\begin{align*}
\Lambda^{(0,q)}(L)&:=\Lambda^{(0,q)}\otimes L,\ \Lambda^{(0,q)}_K(L):=\Lambda^{q}K\otimes L, \\ \Lambda^{(0,q)}_\tau(L)&:=\Lambda^{q-1}K\wedge\Lambda^{(0,1)}_\tau\otimes L,\ 
\cE^{(0,q)}_\ast(\wt U,L):=\Gamma(\wt U,\Lambda^{(0,q)}_\ast\otimes L)
\end{align*}
where $\ast\in\{\ ,K,\tau\}$.
The filtration (\ref{filtration of 0q forms}) turns the Dolbeault complex into a filtered complex:
\begin{equation}\label{filtered complex}
\xymatrix{&0\ar[d]&0\ar[d]\\
&\cE^{(0,1)}_K(\wt U, L)\ar[d]^\upsilon&\cE^{(0,2)}_K(\wt U, L)\ar[d]^\upsilon&\\
\cE^{(0,0)}(\wt U, L)\ar[r]^{\bpar}&\cE^{(0,1)}(\wt U, L)\ar[r]^{\bpar}\ar[d]^{\pi_\tau}&\cE^{(0,2)}(\wt U, L)\ar[r]^{\bpar}\ar[d]^{\pi_\tau}&\dots\\
&\cE^{(0,1)}_\tau(\wt U, L)\ar[d]&\cE^{(0,2)}_\tau(\wt U,\wt L)\ar[d]&\\
&0&0}
\end{equation}
We put:
\begin{displaymath}
 \bpar^0_o: \cE^{(0,0)}(\wt U, L)\xrightarrow{\bpar}\cE^{(0,1)}(\wt U, L)\xrightarrow{\pi_\tau}\cE^{(0,1)}_\tau(\wt U, L)\label{first diagonal dbar}
\end{displaymath}
and
\begin{displaymath}
\bpar^1_o:\cE_K^{(0,1)}(\wt U,L)\xrightarrow{\upsilon}\cE^{(0,1)}(\wt U, L)\xrightarrow{\bpar}\cE^{(0,2)}(\wt U, L)\xrightarrow{\pi_\tau}\cE^{(0,1)}_\tau(\wt U, L).
\end{displaymath}
so that  
\begin{displaymath}
 \bpar^0_o (f_0, f_1)=(\partial_{\bar z} f_0\ d\bar z, \partial_{\bar w} f_1\ d\bar w)
\end{displaymath}
and
\begin{displaymath}
 \bpar^1_o \sum_{i=1}^{2n}(f_0^i\ dX_0^i, f_1^i\ dX_1^i)=\sum_{i=1}^{2n}(\partial_{\bar z} f^i_0\ d\bar z\wedge dX_0^i, \partial_{\bar w} f_0^i\ d\bar w\wedge dX_1^i).\nonumber
\end{displaymath}
We will for brevity write $\bpar_o:=\bpar_o^i,\ i=0,1,\dots$

\section{Integral formula for the $n$-Cauchy-Fueter operator}

Let $U$ be an open subset of $\Q^n\subset\QP^n$.
Assume that $\Sigma\in \mh(U)\subset M_{2n\times2}^\C$ which  is by Theorem \ref{thm characterization of monogenic hull} equivalent to $\wt\Sigma\subset\wt U$. Recall (\ref{embedding of the fiber of the correspondence}) that $\wt\Sigma$ is equal to the image of the  embedding 
$\iota_\Sigma:\CP^1\hookrightarrow\wt U$. It is straightforward to verify that $\iota_\Sigma^\ast L_k\cong Q_k$ and  $\iota_\Sigma^\ast\Lambda^{(0,1)}_\tau(L_k)\cong\Lambda^{(0,1)}(Q_k)$. Hence, there is a well defined  composition of maps
\begin{equation}\label{integration along fibre}
\cE^{(0,1)}_\tau(\wt U,L_k)\ra \cE^{(0,1)}(\CP^1,Q_k)\ra H^1(\CP^1,Q_k)\ra\C^{-k-1},
\end{equation}
where the first map is the pullback associated to $\iota_\Sigma$, the second map is the canonical projection and the last map is the isomorphism from Lemma \ref{lemma vanishing of cohomology groups}. As in Section \ref{section sheaf coh groups over complex projective line}, we may assume that $-k-1>0$ so that (\ref{integration along fibre}) is non-zero.

Let us now assume that $\Sigma\in U$.
As (\ref{integration along fibre}) depends smoothly on $\Sigma$, it induces 
\begin{align}\label{integration}
\tau_\ast:\cE^{(0,1)}_\tau(\wt U,L_k)&\ra\cC^\infty(U,\C^{-k-1})\\
\omega=(h_0d\bar z,h_1d\bar w)&\mapsto(\psi_{0'},\dots,\psi_{(-k-1)'})\nonumber
\end{align}
where 
\begin{equation*}
 \psi_{A'}(x):=\frac{1}{2\pi i}\int_{\C}z^A h_0(z,x)\ d\bar z\wedge dz;\ A=0,\dots,-k-1;\ x\in U.
\end{equation*}
The integral converges as the functions 
$h_i:\wt U_i\ra\C,\ i=0,1$ satisfy the compatibility condition 
\begin{equation}\label{clutching function over twistor space I}
h_1(z^{-1},x)=-z^{-k}\bar z^{2}h_0(z,x), \ z\in\C^\ast,\ x\in U
\end{equation}
and so one can use the proof of Lemma \ref{lemma vanishing of cohomology groups}.

\begin{lemma}\label{lemma short exact sequences}
Put $L:=L_{-3}$. Then the sequences 
\begin{equation}\label{les I}
 0\ra\cE^{(0,0)}(\wt U, L)\xrightarrow{\bpar_o}\cE^{(0,1)}_\tau(\wt U, L)\xrightarrow{\tau_\ast}\cC^\infty(U,\C^2)\ra0
\end{equation}
and  
\begin{equation}\label{les II}
0\ra \cE_K^{(0,1)}(\wt U,L)\xrightarrow{\bpar_o} \cE^{(0,2)}_\tau(\wt U, L)\xrightarrow{\tau_\ast} \cC^\infty(U,\C^{2n\ast})\ra0
\end{equation}
are short exact.
\end{lemma}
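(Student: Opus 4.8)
The plan is to treat the two sequences in parallel, since both assert exactness of a sequence of the shape $0\to(\text{sections of }L\text{-twisted }K\text{-forms})\xrightarrow{\bpar_o}(\cdots)\xrightarrow{\tau_\ast}(\text{functions on }U)\to0$, and to work fibrewise over $U$ using the trivialization $\wt U\cong\CP^1\times U$ from Lemma \ref{lemma correspondence}. The key point is that after restricting to a fibre $\tau^{-1}(x)\cong\CP^1$ and using $\iota_\Sigma^\ast L_{-3}\cong Q_{-3}$ together with $\iota_\Sigma^\ast\Lambda^{(0,1)}_\tau(L_{-3})\cong\Lambda^{(0,1)}(Q_{-3})$, the operator $\bpar_o^0$ becomes (a smooth $U$-family of copies of) the Dolbeault operator $\bpar\colon\cE(\CP^1,Q_{-3})\to\cE^{(0,1)}(\CP^1,Q_{-3})$, and $\tau_\ast$ becomes (a smooth $U$-family of) the map $\omega\mapsto(a_0,a_1)$ of Lemma \ref{lemma vanishing of cohomology groups}. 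That lemma gives exactly: $\ker\bpar=H^0(\CP^1,Q_{-3})=0$, $\coker\bpar=H^1(\CP^1,Q_{-3})\cong\C^2$, and the composite map to $\C^2$ is the asserted isomorphism. So fibrewise the sequence is exact; the content of the lemma is to make this hold with $\cC^\infty$-coefficients in $x$.

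First I would make the identifications explicit in the two coordinate patches $\wt U_i=\mathfrak X_i\times U$. On $\wt U_0$ a section of $\Lambda^{(0,1)}_\tau(L)$ is a function $h_0(z,x)$ (times $d\bar z$), and on $\wt U_1$ a function $h_1(w,x)$ (times $d\bar w$), subject to the compatibility condition (\ref{clutching function over twistor space I}); then $\tau_\ast$ sends $(h_0\,d\bar z,h_1\,d\bar w)$ to $(\psi_{0'},\psi_{1'})$ with $\psi_{A'}(x)=\frac1{2\pi i}\int_\C z^A h_0(z,x)\,d\bar z\wedge dz$, and $\bpar_o^0(f_0,f_1)=(\partial_{\bar z}f_0\,d\bar z,\partial_{\bar w}f_1\,d\bar w)$. \textbf{Injectivity of $\bpar_o$:} if $\partial_{\bar z}f_0\equiv0$ and $\partial_{\bar w}f_1\equiv0$ then for each fixed $x$ the pair $(f_0(\cdot,x),f_1(\cdot,x))$ is a holomorphic section of $Q_{-3}$, hence zero by Lemma \ref{lemma vanishing of cohomology groups} (the $H^0$ statement); so $f_0=f_1=0$. \textbf{$\tau_\ast\circ\bpar_o=0$:} this is the first part of the proof of Lemma \ref{lemma vanishing of cohomology groups} (Stokes' theorem), applied with $x$ as a parameter, since the decay estimates of Lemma \ref{lemma estimates}(a) hold uniformly on compact subsets of $U$.

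\textbf{Surjectivity of $\tau_\ast$ and exactness in the middle:} given $(\psi_{0'},\psi_{1'})\in\cC^\infty(U,\C^2)$, I would write down an explicit preimage, mimicking the explicit harmonic representative of Example \ref{example distinct cohomology class} with $x$-dependent coefficients: set
\begin{equation*}
h_0(z,x):=2\,\frac{\psi_{0'}(x)+\psi_{1'}(x)\,\bar z}{(1+z\bar z)^3},\qquad h_1(w,x):=-2\,\frac{\psi_{0'}(x)\,\bar w+\psi_{1'}(x)}{(1+w\bar w)^3},
\end{equation*}
which is smooth in $(z,x)$ resp. $(w,x)$ and satisfies (\ref{clutching function over twistor space I}); by Example \ref{example distinct cohomology class} its fibrewise integrals are $(\psi_{0'}(x),\psi_{1'}(x))$, so $\tau_\ast$ is onto, and an arbitrary $\omega\in\ker\tau_\ast$ differs from such an explicit section by an element whose fibrewise integrals all vanish. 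For exactness in the middle one then needs: if $\omega=(h_0\,d\bar z,h_1\,d\bar w)$ has $\psi_{A'}\equiv0$ for all $x$, then $\omega=\bpar_o(f_0,f_1)$ with $(f_0,f_1)$ \emph{smooth in $x$}. Here I would run the argument in the proof of Lemma \ref{lemma vanishing of cohomology groups} (solve $\partial_{\bar z}g_0=h_0$, $\partial_{\bar w}g_1=h_1$ by \cite[Theorem 1.4.4]{H}, correct by the finitely many Laurent coefficients $b_i$, which equal the $a_\ell$ and hence vanish) but with the parameter $x$ carried along: the solution operator of the $\bpar$-equation on $\C$ from \cite[Theorem 1.4.4]{H} is given by a convolution kernel and so takes a smooth family of right-hand sides to a smooth family of solutions, and the correction terms $\sum b_i(x)z^i$ are smooth in $x$ because the $b_i$ are given by contour integrals of $g_0$. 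The same scheme gives (\ref{les II}): fibrewise one has $2n$ independent copies of the $Q_{-3}$-situation (because $K\cong\bigoplus_1^{2n}L_1$, so $\Lambda^{(0,2)}_\tau(L)=K\wedge\Lambda^{(0,1)}_\tau\otimes L\cong\bigoplus_1^{2n}\Lambda^{(0,1)}(Q_{-3})$ along fibres and $\Lambda^{(0,1)}_K(L)\cong\bigoplus_1^{2n}Q_{-3}$), and $\tau_\ast$ becomes $2n$ copies of the Lemma \ref{lemma vanishing of cohomology groups} isomorphism, landing in $\cC^\infty(U,\C^{2n\ast})=\cC^\infty(U,(\C^{2n})^\ast)$; injectivity of $\bpar_o$, $\tau_\ast\circ\bpar_o=0$ and surjectivity go through verbatim componentwise.

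The main obstacle is the smooth dependence on the parameter $x$: Lemma \ref{lemma vanishing of cohomology groups} is a statement about a single Riemann sphere, and the whole job of this lemma is to upgrade it to a statement over the family $\wt U\cong\CP^1\times U$, i.e.\ to check that every step — the $\bpar$-solution on $\C$, the extraction of Laurent coefficients, the convergence of the fibre integrals — is $\cC^\infty$ (indeed locally uniform together with all $x$-derivatives) in $x$, and that the compatibility condition (\ref{clutching function over twistor space I}) is preserved throughout. Once the parameter is handled, everything else is a routine transcription of Section \ref{section sheaf coh groups over complex projective line}.
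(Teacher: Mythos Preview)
Your approach for the first sequence (\ref{les I}) is essentially the paper's: injectivity from $H^0(\CP^1,Q_{-3})=0$, surjectivity via the explicit splitting of Example \ref{example distinct cohomology class} (which is exactly the paper's map (\ref{splitting})), and exactness in the middle by rerunning the proof of Lemma \ref{lemma vanishing of cohomology groups} with a smooth parameter $x\in U$. The paper is terser and invokes ``Cauchy's integral formula and partition of unity'' for the parametric $\bar\partial$-solution rather than \cite[Theorem 1.4.4]{H}, but this is cosmetic; your discussion of smooth dependence on $x$ is in fact more careful than the paper's sketch.

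There is, however, a concrete slip in your treatment of (\ref{les II}). You claim that fibrewise one has $2n$ copies of the $Q_{-3}$-situation, but the degree is off by one: since $K\cong\bigoplus_1^{2n}L_1$, one has $K\otimes L_{-3}\cong\bigoplus_1^{2n}L_{-2}$ and likewise $K\wedge\Lambda^{(0,1)}_\tau\otimes L_{-3}\cong\bigoplus_1^{2n}\Lambda^{(0,1)}_\tau(L_{-2})$. On each fibre this restricts to $2n$ copies of the Dolbeault complex for $Q_{-2}$, not $Q_{-3}$; since $H^1(\CP^1,Q_{-2})\cong\C$ by Lemma \ref{lemma vanishing of cohomology groups} (here $-k-1=1$), the cokernel is $\C^{2n}$, matching the stated target. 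Your version would yield $(\C^2)^{2n}=\C^{4n}$, inconsistent with the sequence as written. The paper makes exactly this correction (``using $K\otimes L_{-3}\cong\bigoplus_{i=1}^{2n}L_{-2}$'') and then declares the proof analogous; with this fix your argument goes through unchanged.
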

\begin{proof}
We will prove only the exactness of (\ref{les I}) as, using  $K\otimes L_{-3}\cong\bigoplus_{i=1}^{2n}L_{-2}$, the proof of the exactness of (\ref{les II}) is analogous. As $H^0(\CP^1,Q_{-3})$ is zero, it follows  that also $\ker(\bpar_o)$ is zero. As $\tau_\ast$ is obviously  surjective (see also (\ref{splitting}) below), it remains to show that $\ker(\tau_\ast)=\im(\bpar_o)$.

Let $\omega$ be as in  (\ref{integration}).
Using Cauchy's integral formula and partition of unity underlying the open cover $\{\mathfrak X_i:\ i=0,1\}$ of $\CP^1$, it is easy to construct functions  $g_i:\wt U_i\cong \mathfrak{X}_i\times U\ra\C,\ i=0,1$ such that $\partial_{\bar z}g_0= h_0,\ \partial_{\bar w}g_1= h_1$. Moreover, if $\psi_{A'}=0,\ A=0,1$, then arguing as in the proof of Lemma \ref{lemma vanishing of cohomology groups}, one can find functions $t_i:\wt U_i\ra\C$ such that $\partial_{\bar z}t_0=0$ and $\partial_{\bar w}t_1=0$  and that $f_i:=g_i-t_i,\ i=0,1$  satisfy (\ref{clutching function over twistor space}). Then $f=(f_0,f_1)\in\cE^{(0,0)}(\wt U,L)$ and $\bpar_o f=\omega$.
\end{proof}

Using (\ref{harmonic cohomology}), it is clear that  
\begin{align}\label{splitting}
\cC^\infty(U,\C^2)\ra&\ \cE^{(0,1)}(\wt U,L),\\ 
(\psi_{A'})_{A=0,1}\mapsto&\ (\psi_{A'})_{A=0,1}^\sharp:= 2\bigg(\frac{ \psi_{0'}d\bar z+\psi_{1'}\bar z d\bar z}{(1+z\bar z)^3},\frac{-\psi_{0'}\bar wd\bar w-\psi_{1'}d\bar w}{(1+w\bar w)^3}\bigg)\nonumber
\end{align}
is a splitting of the map $\tau_\ast\circ\pi_\tau$. It is then straightforward to verify that there is  a commutative diagram 
\begin{equation}\label{commutative diagram}
\xymatrix{
\cE^{(0,1)}(\wt U, L_{-3})\ar[r]^{\bpar}\ar[d]^{\tau_\ast\circ\pi_\tau}&\cE^{(0,2)}(\wt U, L_{-3})\ar[d]^{\tau_\ast\circ\pi_\tau}\\
\cE(U,\C^2)\ar[r]^D&\cE(U,\C^{2n\ast})}
\end{equation}
where $D$ is the $n$-Cauchy-Fueter operator.

\begin{thm}\label{thm isom cohomology and monogenic functions}
Consider
\begin{equation}\label{composition in thm isom}
\tau_\ast\circ\pi_\tau:\cE^{(0,1)}(\wt U,L)\ra\cE^{(0,1)}_\tau(\wt U,L)\ra\cC^\infty(U,\C^2).
\end{equation}
If $\varOmega\in\cE^{(0,1)}(\wt U,L)$ is closed, then $\tau_\ast\circ\pi_\tau(\varOmega)\in\mathcal{R}(U)$ and the composition (\ref{composition in thm isom})  induces  isomorphism
\begin{equation}\label{isomorphism}
\PT: H^1(\widetilde U,L)\ra \mathcal{R}(U).
\end{equation}
\end{thm}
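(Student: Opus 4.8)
The plan is to combine the spectral-sequence machinery of the filtered Dolbeault complex (\ref{filtered complex}) with the two short exact sequences of Lemma \ref{lemma short exact sequences} and the commutative diagram (\ref{commutative diagram}). First I would establish that $\tau_\ast\circ\pi_\tau$ sends closed forms to monogenic functions: if $\varOmega\in\cE^{(0,1)}(\wt U,L)$ satisfies $\bpar\varOmega=0$, then chasing $\varOmega$ around (\ref{commutative diagram}) gives $D(\tau_\ast\circ\pi_\tau(\varOmega))=\tau_\ast\circ\pi_\tau(\bpar\varOmega)=0$, so $\tau_\ast\circ\pi_\tau(\varOmega)\in\mathcal{R}(U)$. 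The same diagram shows that if $\varOmega=\bpar g$ is exact, then $\tau_\ast\circ\pi_\tau(\varOmega)=D(\tau_\ast\circ\pi_\tau(g))$; but $g\in\cE^{(0,0)}(\wt U,L)=\Gamma(\wt U,L_{-3})$ and since $H^0(\CP^1,Q_{-3})=0$, restriction to each fibre $\wt\Sigma$ forces $g$ to vanish identically, hence $\tau_\ast\circ\pi_\tau$ already kills $g$ and a fortiori $\bpar g$. Therefore $\PT$ is well defined on $H^1(\wt U,L)$, and moreover it lands in $\mathcal{R}(U)$.

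Next I would prove surjectivity and injectivity using the splitting (\ref{splitting}). For surjectivity: given $\psi=(\psi_{A'})_{A=0,1}\in\mathcal{R}(U)$, form $\psi^\sharp\in\cE^{(0,1)}(\wt U,L)$. By the commutativity of (\ref{commutative diagram}) and the fact that $\tau_\ast\circ\pi_\tau\circ(-)^\sharp=\mathrm{id}$ (it is a splitting of $\tau_\ast\circ\pi_\tau$), we have $\tau_\ast\circ\pi_\tau(\bpar\psi^\sharp)=D\psi=0$; hence by the exactness of (\ref{les I}) — applied in degree $2$, i.e. to $\pi_\tau(\bpar\psi^\sharp)\in\ker(\tau_\ast)=\im(\bpar_o)$ on $\cE^{(0,2)}_\tau$ via (\ref{les II}) — there exists $\xi\in\cE^{(0,1)}_K(\wt U,L)$ with $\bpar_o\xi=\pi_\tau(\bpar\psi^\sharp)$. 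Lifting $\xi$ through $\upsilon$ to $\cE^{(0,1)}(\wt U,L)$ and subtracting, $\varOmega:=\psi^\sharp-\upsilon(\xi)$ satisfies $\pi_\tau(\bpar\varOmega)=0$, i.e. $\bpar\varOmega\in\Gamma(\wt U,\Lambda^2 K\otimes L)$; a further correction using that the $K$-direction cohomology of $\bpar$ along the fibres vanishes (this is where one iterates the filtration argument one more step) produces a closed representative $\varOmega'$ with the same image, whence $\PT[\varOmega']=\psi$. For injectivity: if $\varOmega$ is closed and $\tau_\ast\circ\pi_\tau(\varOmega)=0$, then $\pi_\tau(\varOmega)\in\ker(\tau_\ast)=\im(\bpar_o)$ by (\ref{les I}), so $\varOmega=\bpar_o^0 g+\upsilon(\eta)$ for some $g,\eta$; then $\bpar\varOmega=0$ forces $\bpar_o^1\eta=0$, so by exactness of (\ref{les II}) $\eta$ itself is $\bpar_o$ of something, and assembling these corrections shows $[\varOmega]=0$ in $H^1(\wt U,L)$.

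The cleanest way to organize all of this is to run the spectral sequence of the filtered complex (\ref{filtered complex}): the first differential is $\bpar_o$ along the $\CP^1$-fibres, and Lemma \ref{lemma short exact sequences} together with Lemma \ref{lemma vanishing of cohomology groups} (applied to $Q_{-3}$ and $Q_{-2}$) identifies the $E_1$-page — the fibrewise $\bpar$-cohomology — with the de Rham-type complex $\cC^\infty(U,\C^2)\xrightarrow{D}\cC^\infty(U,\C^{2n\ast})\to\cdots$ placed in the appropriate diagonal, while the columns with $H^0(\CP^1,Q_k)=0$ contribute nothing. Hence $E_2$ in total degree $1$ is exactly $\ker D=\mathcal{R}(U)$, and one checks the spectral sequence degenerates at this spot (there is no room for higher differentials into or out of that slot because the relevant fibrewise $H^0$'s vanish), giving $H^1(\wt U,L)\cong\mathcal{R}(U)$; tracking the edge homomorphism shows it is realized by $\tau_\ast\circ\pi_\tau$, i.e. by the explicit integral formula (\ref{integration}).

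The main obstacle is the surjectivity/injectivity bookkeeping at the level of the filtration: one must carefully use not just the two short exact sequences already proved but also the vanishing of the fibrewise $\bpar$-cohomology in the purely $K$-horizontal directions (the bundles $\Lambda^q K\otimes L$ restrict on each fibre to sums of $Q_{-3+q}\otimes(\text{trivial})$, and one needs $H^0$ of these to vanish to kill the relevant correction terms, and $H^1$ to vanish to solve the equations). Verifying that these vanishings hold for all the line bundles that appear — equivalently, that the only surviving fibrewise cohomology is $H^1(\CP^1,Q_{-3})\cong\C^2$ feeding the bottom row — and that consequently the spectral sequence collapses at $E_2$ in the degree we care about, is the crux; everything else is the diagram chase packaged by Lemma \ref{lemma short exact sequences} and the commutativity of (\ref{commutative diagram}).
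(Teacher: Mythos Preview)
Your approach is essentially the paper's: the filtration (\ref{filtered complex}), the short exact sequences of Lemma~\ref{lemma short exact sequences}, the square (\ref{commutative diagram}), and the splitting (\ref{splitting}); the spectral-sequence reformulation in your third paragraph is just a repackaging of the same diagram chase.

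There is, however, a genuine error in your well-definedness step. You claim that for $g\in\cE^{(0,0)}(\wt U,L)$ the vanishing $H^0(\CP^1,Q_{-3})=0$ forces $g\equiv0$. This is false: $g$ is only a \emph{smooth} section, and smooth sections of $Q_{-3}$ over $\CP^1$ are plentiful. (The preceding equation $\tau_\ast\circ\pi_\tau(\bpar g)=D(\tau_\ast\circ\pi_\tau(g))$ does not even type-check, since $\tau_\ast\circ\pi_\tau$ is not defined on $\cE^{(0,0)}$.) The paper instead observes that $\pi_\tau(\bpar g)=\bpar_o g$ and invokes exactness of (\ref{les I}) directly. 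Two smaller imprecisions: in your injectivity argument, exactness of (\ref{les II}) says $\bpar_o$ is \emph{injective} on $\cE^{(0,1)}_K(\wt U,L)$, so $\bpar_o\eta=0$ gives $\eta=0$ outright (not ``$\eta$ is $\bpar_o$ of something''), whence $\varOmega=\bpar g$. And for surjectivity no ``further correction'' is needed: once $\bpar(\psi^\sharp-\upsilon(\xi))$ lands in $\cE^{(0,2)}_K(\wt U,L)$, the paper notes that $\cE^{(0,2)}_K(\wt U,L)\xrightarrow{\upsilon}\cE^{(0,2)}(\wt U,L)\xrightarrow{\bpar}\cE^{(0,3)}(\wt U,L)$ is injective (same argument as Lemma~\ref{lemma short exact sequences}, now using $H^0(\CP^1,Q_{-1})=0$ since $\Lambda^2K\otimes L\cong\bigoplus L_{-1}$), and $\bpar^2=0$ then forces $\bpar(\psi^\sharp-\upsilon(\xi))=0$ already.
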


\begin{proof}
The first claim is an easy consequence of the commutativity of (\ref{commutative diagram}). By (\ref{les I}),  it follows that $\tau_\ast\circ\pi_\tau(\varOmega)=0$  provided that $\varOmega$ is exact. We see that (\ref{isomorphism}) is injective and it remains to show surjectivity. So assume that $(\psi_{A'})_{A=0,1}$ is monogenic  in $U$ and put $\varOmega:=(\psi_{A'})_{A=0,1}^\sharp$ for brevity. Then by the commutativity of (\ref{commutative diagram}) again, $\tau_\ast\circ\pi_\tau\circ \bpar(\varOmega)=0$ and by the exactness of (\ref{les II}), it follows that there is $\theta\in\cE^{(0,1)}_K(\wt U,L)$ such that $\bpar_o(\theta)=\pi_\tau\circ\bpar(\varOmega)$. Hence, $\bpar(\varOmega-\theta)\in\cE^{(0,2)}_K(\wt U,L)$. But arguing as in the proof Lemma \ref{lemma short exact sequences}, it is easy to see that $\cE^{(0,2)}_K(\wt U,L)\xrightarrow{\upsilon}\cE^{(0,2)}(\wt U,L)\xrightarrow{\bpar}\cE^{(0,3)}(\wt U,L)$ is injective. Hence, $\bpar(\varOmega-\theta)=0$ which completes the proof.
\end{proof}

Now  we are ready to proof the main result of this article.
\begin{proof}[Proof of Theorem \ref{main thm}]
As the only assumption on $\Sigma$ in (\ref{integration along fibre}) is that $\wt\Sigma\subset\wt U$, it follows that (\ref{integration along fibre}) induces a map
\begin{equation}\label{integration along fibres c}
\tau_\ast^\C:\cE^{(0,1)}_\tau(\wt U,L)\ra \cC^\infty(\mh (U),\C^{2}). 
\end{equation}
Explicitly, if $\omega$ is as in (\ref{integration}), then the value of $\tau_\ast^\C(\omega)$  at the point $\Sigma=(z_{AB'})_{A=0,1,\dots,2n-1}^{B=0,1}$  is 
$(\psi_{0'}^\C(\Sigma),\psi_{1'}^\C(\Sigma))$
where 
\begin{equation}\label{integral fmla for DC}
\psi_{A'}^\C(\Sigma):=\frac{1}{2\pi i}\int_\C z^A h_0(z,z_{00'}-zz_{01'},\dots,z_{2n-1,0'}-zz_{2n-1,1'})d\bar z\wedge dz.
\end{equation}
Arguing as in the proof of Theorem \ref{thm isom cohomology and monogenic functions}, it follows that $\tau_\ast^\C\circ\pi_\tau(\varOmega)=0$ whenever $\varOmega$ is exact. On the other hand, if $\varOmega$ is closed, then it is easy to see that $\tau_\ast^\C\circ\pi_\tau(\varOmega)$ is holomorphic and thus, $\tau_\ast^\C$ induces  a map
\begin{equation}\label{integral formula c}
\PT^\C: H^1(\wt U,L)\ra\cO(\mh(U),\C^{2}).
\end{equation}
Differentiating under the integral sign in (\ref{integral fmla for DC}), we see that $D^\C\tau_\ast^\C\circ\pi_\tau([\varOmega])=0$. Hence, the map (\ref{integral formula c}) takes values in $\mathcal{R}^\C(\mh(U))$ and we obtain a commutative diagram
\begin{equation*}
\xymatrix{H^1(\wt U,L)\ar[r]^{\PT^\C}\ar[dr]^{\PT}&\mathcal{R}^\C(\mh(U))\ar[d]\\
 &\mathcal{R}(U)}
\end{equation*}
where the vertical arrow is the restriction map.
Since the diagonal arrow is an isomorphism, it follows that the restriction map is surjective. As it is obviously injective, it is an isomorphism and thus, Theorem \ref{main thm} follow. 
\end{proof}

Notice that we have also shown
\begin{lemma}
 The map (\ref{integral formula c}) induces isomorphism
 \begin{equation*}
  H^1(\wt U,L_k)\ra\mathcal{R}^\C(\mh(U)).
 \end{equation*}
\end{lemma}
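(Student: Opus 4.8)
The plan is to treat this as a one-more-arrow corollary of the proof of Theorem~\ref{main thm}, reading off an extra conclusion from the commutative triangle already built there. First I would recall that in that proof we constructed, for $L=L_{-3}$, the explicit fibre-integral map $\PT^\C\colon H^1(\wt U,L)\ra\cO(\mh(U),\C^2)$ of (\ref{integral formula c}), showed via differentiation under the integral sign in (\ref{integral fmla for DC}) that $D^\C\PT^\C=0$ so that $\PT^\C$ in fact lands in $\mathcal{R}^\C(\mh(U))$, and produced the commutative triangle whose two legs out of $H^1(\wt U,L)$ are $\PT^\C$ followed by the restriction $\mathcal{R}^\C(\mh(U))\ra\mathcal{R}(U)$, $(\psi^\C_{A'})\mapsto(\psi^\C_{A'}|_U)$, on one side, and $\PT$ on the other.

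Next I would simply invoke the two isomorphism facts now available: the diagonal map $\PT\colon H^1(\wt U,L)\ra\mathcal{R}(U)$ is an isomorphism by Theorem~\ref{thm isom cohomology and monogenic functions}, and the vertical restriction map $\mathcal{R}^\C(\mh(U))\ra\mathcal{R}(U)$ is an isomorphism by Theorem~\ref{main thm}. Commutativity of the triangle then gives $\PT^\C=(\text{restriction})^{-1}\circ\PT$, a composite of two isomorphisms, hence an isomorphism. That is precisely the assertion of the lemma with $L_k=L=L_{-3}$, and no further computation is required — the whole argument is a two-line diagram chase.

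The only point needing a word of care — and I do not expect it to be a real obstacle — is the bookkeeping of which line bundle the symbol $L_k$ refers to. The space $\mathcal{R}^\C(\mh(U))$ is by definition the kernel of the holomorphic $n$-Cauchy–Fueter operator $D^\C$ of (\ref{hol n-CF op}), which under the Penrose transform is matched with $H^1(\wt U,L_{-3})$; so one should read ``$L_k$'' here as ``$L=L_{-3}$''. If instead one wants a statement valid for a general weight $k\le-2$, then one must first replace $\mathcal{R}^\C$ on the target side by the kernel of the complexification of the higher-weight operator implicit in (\ref{integration}) (the analogue of the bottom row of (\ref{commutative diagram}) for $L_k$), after which the weight-$k$ versions of Lemma~\ref{lemma short exact sequences}, Theorem~\ref{thm isom cohomology and monogenic functions} and Theorem~\ref{main thm} feed into the identical diagram chase. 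In either reading the proof reduces to combining those two theorems through the triangle of the previous proof.
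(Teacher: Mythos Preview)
Your proposal is correct and is exactly the paper's approach: the lemma is stated immediately after the proof of Theorem~\ref{main thm} with the preface ``Notice that we have also shown'', i.e.\ it is read off from the same commutative triangle by combining the isomorphisms $\PT$ (Theorem~\ref{thm isom cohomology and monogenic functions}) and the restriction map (Theorem~\ref{main thm}). Your remark that ``$L_k$'' should be read as $L=L_{-3}$ is also apt; the subscript in the statement is a slip.
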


\def\bibname{Bibliography}

\textsc{Mathematical Institute of  Charles University, Sokolovsk\'a  49/83, Prague, The Czech Republic.}\\
\smallskip

\textit{E-mail address}: \texttt{salac$@$karlin.mff.cuni.cz}

\begin{thebibliography}{99}
\addcontentsline{toc}{chapter}{\bibname}
\bibitem{B} R.J. {\sc Baston}. \emph{Quaternionic complexes}. J. Geom. Phys., vol. 8 (1992), p. 29-52. 

\bibitem{BE} R.J.  {\sc Baston}, M. {\sc Eastwood}. \emph{The Penrose transform - its interaction   with representation theory}.   Oxford University Press, 1989. ISBN 0-19-853565-1.


\bibitem{BAS} J. {\sc Bure\v{s}}, A. {\sc Damiano}, I. {\sc Sabadini}. \emph{Explicit resolutions for the complex of several Fueter operators}. J. Geom. Phys. vol. 57 (2007), no.3, p. 765-775.

\bibitem{BS} J. {\sc Bure\v{s}}, V. {\sc Sou\v{c}ek}. \emph{Complexes of invariant dif and only iferential operators in several quaternionic variables}. Compl. Var. and Ell. Eq., vol. 51 (2006), no. 5-6, p. 463-487.
 
\bibitem{CS}
A.  {\sc \v Cap,}  J. {\sc Slov\'ak}. \emph{Parabolic Geometries I, Background and General Theory}.
  Am. Math. Soc., Providence, 2009. ISBN 978-0-8218-2681-2.

\bibitem{CSSS}
   F. {\sc Colombo}, I. {\sc Sabadini}, F. {\sc Sommen}, D. C. {\sc Struppa}.
  \emph{ Analysis of Dirac Systems and Computational Algebra}. Birkhauser, Boston, 2004. ISBN 0-8176-4255-2.

\bibitem{CSS}
  F. {\sc Colombo,}, V. {\sc Sou\v cek}, D. C. {\sc Struppa}.   \emph{Invariant resolutions for several Fueter operators}. J.  Geom. Phys., vol. 56 (2006), no. 7, p. 1175-1191. 

\bibitem{E} M. {\sc Eastwood}. \emph{The twistor construction and Penrose transform in split signature}. Asian J. Math. vol. 11 (2007), no. 1, p. 103-112.  

\bibitem{EX} M. {\sc Eastwood}, F. {\sc Xu}. \emph{The harmonic hull and twistor theory}. Lie Groups: Structure, Actions, and Representations
vol. 306 of the series Progress in Mathematics (2013), p. 59-80.  ISBN 978-1-4614-7192-9.

\bibitem{Ha} W. K. Hayman. \emph{Power series expansions for harmonic functions}. Bull. Lond. Math. Soc., vol. 2 (1970), is. 2, p. 152-158.

\bibitem{H} L. {H\"ormander}. \emph{An introduction to complex analysis in several variables}. D. Van Norstrand Company, Inc., Princeton, New Jersey, 1966.


\bibitem{KW} Q. {\sc Kang},  W. {\sc Wang}. \emph{On Penrose integral formula and series expansion of
-regular functions on the quaternionic space $\Q^n$}. J. Geom. Phys., vol. 64 (2013), p. 192-208.

\bibitem{Pe} D. {\sc Pertici}. Funzioni regolari di pi\'u variabili quaternioniche. Ann. Mat. Puera e Appl. Serie IV, CLI (1988), p. 39-65.

\bibitem{SSSL}
I.  {\sc Sabadini}, F. {\sc Sommen}, D. C. {\sc Struppa}, P. van {\sc Lancker}.
  \emph{Complexes of Dirac operators in Clifford algebras}.
Mathematische Zeitschrift. 2002, vol. 239, num. 2, p. 293-320. ISSN: 0025-5874.

\bibitem{TS} T. {\sc Sala\v c}. Domains of monogenicity and twistor theory. to appear.

\bibitem{TSI} T. {\sc Sala\v c}. Domains of monogenicity and pseudoconvexity. to appear.

\bibitem{Wa} W. {\sc  Wang}. \emph{On non-homogeneous Cauchy-Fueter equations and Hartogs’ phenomenon in several quaternionic variables}. J. Geom. Phys.
vol 58  (2008), p. 1203-1210

\bibitem{WaI} W. {\sc Wang}. \emph{The $k$-Cauchy-Fueter complex, Penrose transformation and Hartogs phenomenon for quaternionic $k$-regular functions}. J. Geom.  Phys. vol 60  (2010), p. 513-530

\bibitem{WW} R.S. {\sc Ward}, R.O. {\sc Wells} Jr. \emph{Twistor geometry and Field theory}. Cambridge University Press 1990, ISBN 0-521-42268-X
\end{thebibliography}
\end{document}